\newcommand\crossing[4]{
\begin{tikzpicture}
\foreach \x in {#1,...,0}
\coordinate (\x) at (#1-\x,0) node [below] at (\x) {\x};
\foreach \x/\y in {#3}
{\draw (\x)--(#1-\x,\x+1);
\draw (\y)--(#1-\y, \x+1);
\draw (#1-\x,\x+1)--(#1-\y, \x+1);}
\foreach \x in {#4}
\draw (\x)--(#1-\x, #2+1);
\end{tikzpicture}
}
\theoremstyle{definition}
\newtheorem{dfn}{Definition}[section]
\theoremstyle{plain}
\newtheorem{thm}[dfn]{Theorem}
\newtheorem{prop}[dfn]{Proposition}
\newtheorem{cor}[dfn]{Corollary}
\newtheorem{lm}[dfn]{Lemma}
\theoremstyle{remark}
\newtheorem{exam}[dfn]{Example}
\newtheorem{rem}[dfn]{Remark}
\title{A dual and conjugate system for $q$-Gaussians for all $q$}
\author{Akihiro Miyagawa}
\address{Department of Mathematics, Kyoto University, Kitashirakawa Oiwake-cho, Sakyo-ku, 606-8502, Japan}
\email{miyagawa.akihiro.43v@st.kyoto-u.ac.jp}
\author{Roland Speicher}
\address{Saarland University, Department of Mathematics, D-66123 Saarbr\"ucken, Germany}
\email{speicher@math.uni-sb.de}
\begin{document}

\maketitle

\begin{abstract}
We provide a concrete formula for a dual system as well as for a conjugate system of $q$-Gaussians represented on the $q$-deformed Fock space. Moreover, using this formula, we prove the existence of a free Gibbs potential and that the non-microstates free Fisher information is finite for any $q$ with $-1<q<1$, which is an improvement on a previous result of Y. Dabrowski. We also indicate how our results can be extended to the more general setting of mixed $q_{ij}$-relations.
\end{abstract}

\section{Introduction}
The canonical commutation relations (CCR) are fundamental relations which are algebraically defined by 
$$ a_i a_j^* - a_j^* a_i = \delta_{i j} $$
While these relations describe Bosons in quantum mechanics, there are also the canonical anticommutation relations (CAR) which describe Fermions. These relations are defined by
$$ a_i a_j^* + a_j^* a_i = \delta_{i j}.$$
In \cite{BS91}, M. Bożejko and R. Speicher introduced $q$-CCRs which interpolate between CCR and CAR by using a parameter $-1\le q \le 1$, 
$$ a_i a_j^* - qa_j^* a_i = \delta_{i j}.$$
We will here only deal with the case of finitely many such operators, i.e., we will have $d\in\mathbb{N}$ and $1\leq i,j\leq d$.
These relations are represented as left creation and annihilation operators on the $q$-defomed Fock space, and studying the field operators ($q$-Gaussians) $A_i=a_i + a_i^*$ has attracted quite some interest. Especially, the $q$-Gaussian algebras $W^*(A)$, i.e., the von Neumann algebras generated by the $A_i$, have been studied for many years. One of the basic questions is whether and how those algebras depend on $q$.
The extreme cases $q=1$ and $q=-1$ are easy to understand and they are in any case different from the $q$ in the open interval $-1<q<1$. The central case $q=0$ is generated by free semicircular elements and free probability tools give then easily that this case is isomorphic to the free group factor. So the main question is whether the $q$-Gaussian algebras 
are, for $-1<q<1$, isomorphic to the free group factor. 

Over the years it has been shown that these algebras share many properties with the free group factors. For instance, for all $-1<q<1$ the $q$-Gaussian algebras are II$_1$-factors, non-injective, prime, and have strong solidity. Here is an incomplete list of papers proving these properties \cite{BKS97}, \cite{Sniady04}, \cite{Shlyakhtenko04}, \cite{Nou04}, \cite{Ricard05},  \cite{Avsec11}.   
There are also some random matrix models for $q$-Gaussians in \cite{Sniady01}, and \cite{PS21}.

A partial answer to the isomorphism problem was achieved by A. Guionnet and D. Shlyakhtenko \cite{GS14}, who proved that the $q$-Gaussian algebras are isomorphic to the free group factors for small $|q|$ (where the size of the interval depends on $d$ and goes to zero for $d\to\infty$). However, it is still open whether this is true for all $-1<q<1$.

In this paper, we compute a dual system and from this also a conjugate system for $q$-Gaussians. These notions were introduced by D. Voiculescu \cite{Voi98} in the context of free entropy and have turned out to carry important information about distributional properties of the considered operators and to have many implications for the related von Neumann algebras. 

In the following theorem we state our main result.

\begin{thm}\label{main}
Let $d\in\mathbb{N}$ be finite and $-1<q<1$ and consider corresponding $q$-Gaussians $A = (A_1,\ldots,A_d)$. 
Then there exists a normalized dual system and thus also a conjugate system for the $q$-Gaussians $A = (A_1,\ldots,A_d)$. Furthermore, the conjugate system is Lipschitz conjugate.
\end{thm}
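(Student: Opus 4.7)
The plan is to construct, on the $q$-deformed Fock space, explicit operators $D_1,\ldots,D_d$ forming a normalized dual system for the $q$-Gaussians: operators satisfying $[D_i,A_j] = \delta_{ij} P_\Omega$, where $P_\Omega$ is the rank-one orthogonal projection onto the vacuum $\Omega$, together with the normalization $D_i\Omega = 0$. Given such a dual system, the associated conjugate variables are read off as the single vectors $\xi_i := D_i^*\Omega$, and Voiculescu's conjugate relation $\tau(\xi_i P(A)) = (\tau\otimes\tau)\partial_i P(A)$ follows by commuting $D_i$ through the monomial $P(A)$ and using the dual relation together with the normalization. The whole theorem thus reduces to producing the ``concrete formula'' for $D_i$ promised in the abstract, verifying the commutation relation, establishing the $L^2$-convergence that places $\xi_i$ in $L^2(W^*(A),\tau)$, and finally upgrading this to a Lipschitz-type bound on the free difference quotients of $\xi_i$.

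I would set up an ansatz $D_i = \sum_{k\ge 0} c_k(q)\,T_k^{(i)}$, where $T_k^{(i)}$ is a number-lowering operator picking out a specific $q$-weighted occurrence of the index $i$ inside a simple tensor $e_{i_1}\otimes\cdots\otimes e_{i_n}$ and annihilating the vacuum. Inserting the ansatz into $[D_i, a_j + a_j^*]$ and using both the $q$-commutation $a_i a_j^* - q a_j^* a_i = \delta_{ij}$ and the non-trivial interaction of the annihilation operators among themselves produces level-by-level equations that the coefficients $c_k(q)$ must satisfy; one expects these to admit a closed form in terms of $q$-integers $[n]_q = (1-q^n)/(1-q)$ or the $q$-Pochhammer symbol, tuned precisely so that all non-vacuum contributions in the commutator cancel. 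With the formula pinned down, verifying $[D_i, A_j] = \delta_{ij} P_\Omega$ becomes a direct, if combinatorially intricate, computation on simple tensors, and setting $\xi_i = D_i^*\Omega$ then delivers the conjugate system automatically.

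The finiteness of the non-microstates free Fisher information reduces to $\|\xi_i\|_2 < \infty$, while Lipschitz conjugacy requires realising $\xi_i$ as a noncommutative power series in $A_1,\ldots,A_d$ whose free difference quotients $\partial_j\xi_i$ are bounded in operator norm. Both properties should be visible directly from the explicit formula for $D_i$, since the commutation of $D_i$ with any $A_j$ is controlled by the same $q$-combinatorial coefficients. The main obstacle throughout is convergence as $|q|\to 1$: the $q$-inner product on the $n$-particle space contracts sharply for large $n$, so naive term-by-term bounds of the type used by Dabrowski (which is what limited his argument to small $|q|$) diverge before reaching the boundary. The improvement must therefore come from cancellations in the explicit coefficients $c_k(q)$, combined with the sharp spectral description of the $q$-symmetrizer due to Bo\.zejko--Speicher and Zagier, to yield $L^2$ and operator-norm estimates uniform on compact subsets of $(-1,1)$. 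Making this cancellation quantitative is where the real work of the proof is concentrated.
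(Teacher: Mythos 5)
Your overall strategy is the paper's: build explicit operators $D_1,\dots,D_d$ on the $q$-Fock space with $D_i e_0=0$ and $[D_i,A_j]=\delta_{ij}P$, read off $\xi_i=D_i^*e_0$ via Shlyakhtenko's equivalence between normalized dual systems and conjugate systems, and then control convergence from the explicit coefficients. However, the proposal stops exactly where the theorem is actually decided, and it points at the wrong mechanism there. You write that the uniformity in $|q|<1$ ``must come from cancellations in the explicit coefficients $c_k(q)$'' and that making this quantitative ``is where the real work is concentrated'' --- but no cancellation is ever exhibited, and in fact none is needed: the point is structural. When one pairs $D_i e_w$ against $e_0$, only partitions with no singletons survive, and every such partition on $2m$ points (pairing $0$ with $m$ and each of $1,\dots,m-1$ across it) is forced to have at least $\tfrac{m(m-1)}{2}$ crossings, so the $m$-particle contribution automatically carries a factor $q^{m(m-1)/2}$. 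Every other factor in the estimates --- $d^m$ from the number of words, $C^m$ from the norm of products of free right annihilation operators (which itself requires Bo\.zejko's comparison $P^{(m)}\otimes 1\le w(q)^{-1}P^{(m+1)}$, an ingredient you do not name), $\sqrt{[m]_{|q|}!}$, the Haagerup constants, and even the factorial counts of partitions in the Lipschitz estimate --- grows at most like $e^{Cm\log m}$, so the quadratic exponent wins by the ratio test. Without identifying this quadratic exponent, the convergence claim is an unverified hope, which is precisely the obstruction that confined Dabrowski's argument to small $|q|$.

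Two further points where the plan as stated would not go through. First, the ansatz $D_i=\sum_k c_k(q)T_k^{(i)}$ with scalar coefficients indexed by a single level $k$ is adequate only for $d=1$; for $d\ge 2$ the coefficient of each term in $D_i e_{j_n\cdots j_1}$ depends on the full pairing structure, and the correct formula is a signed sum over a special class of partitions $B(n+1)$ whose crossings must be counted by a non-standard drawing rule (the paper shows explicitly that the na\"ive crossing number gives wrong powers of $q$, e.g.\ $q^4$ where the usual count gives $2$). Verifying $[D_i,A_j]=\delta_{ij}P$ then requires a bijective bookkeeping between partitions in $B(n)$, $B(n+1)$ and $B(n+2)$, not just a coefficient recursion. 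Second, Lipschitz conjugacy does not follow ``directly'' from the formula for $D_i$: one needs a separate combinatorial formula for $\partial_j e_w$ over yet another partition class, and one must pass from $L^2$ to operator-norm bounds on each tensor leg via Bo\.zejko's Haagerup-type inequality $\|\eta\|\le (m+1)C_{|q|}^{3/2}\|\eta\|_q$. These are concrete missing steps rather than routine details.
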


Let us point out that the existence of a conjugate system for the $q$-Gaussians was shown for small $|q|$ by Y. Dabrowski \cite{Dabrowski10}, and Guionnet and Shlyakhtenko proved their isomorphism by using this result and the free monotone transport.
We remark that they consider right annihilation operators as a different version of dual systems, which are operators such that their commutators with $q$-Gaussians are equal to certain Hilbert-Schmidt operators. On the other hand, our approach starts from finding the concrete formula for dual systems which are operators whose commutators with $q$-Gaussians are exactly the orthogonal projection onto the vacuum vector. Our argument is based on the recursion induced by the definition of dual systems, and it allows us to give a precise combinatorial formula involving crossing partitions.

We want to call the attention of the reader to the fact that our formulas for the dual system and the conjugate operators contain a factor of the form $q^{m(m-1)/2}$ as coefficients for elememts in the $m$-particle space, in contrast to previous works where such coefficents were usually of the form $q^m$. Since all other exponents arising from norm estimates are only linear in $m$, this quadratic exponent in $m$ is in the end responsible for the fact that our estimates work for all $q$ in the interval $(-1,1)$.

Having the existence of conjugate systems for all $q$ with $-1<q<1$ has then, by general results, many consequences for all such $q$; like, for any $-1 < q <1$,
non-$\Gamma$ of $q$-Gaussian algebras, by \cite{Dabrowski10JFA}, or that any non-constant self-adjoint rational function over $q$-Gaussians has no atom in its distribution, by \cite{MSY18}, \cite{MSY19}. In Lemma 37 of \cite{Dabrowski10}, algebraic freeness of noncommutative power series over $q$-Gaussians is proved. 

There are also quite some applications of the fact that our conjugate system is Lipschitz conjugate. By \cite{Dabrowski10}, the existence of a Lipschitz conjugate system and Connes embeddability (which is given for our $q$-Gaussians, for all $q$) imply the maximality of the micro-states free entropy dimension. As a consequence of this or a direct application of Theorem 1.3 in \cite{DL16}, we can recover the fact that $W^*(A)$ has no Cartan subalgebra for any $-1<q<1$, which has been already shown by S. Avsec \cite{Avsec11} by other methods. Furthermore, the paper by M. Banna and T. Mai \cite{BM20} gives us Hölder continuity of cumulative distribution functions of noncommutative polynomials in the $q$-Gaussians. 

Let us collect in the following corollary the most important consequences of our result.

\begin{cor}
For all $-1<q<1$ we have the following properties.

1) The division closure of the $q$-Gaussians in the unbounded operators affiliated to $W^*(A_1,\dots,A_d)$ is isomorphic to the free field. This implies that any noncommutative rational function $r$ in $d$ non-commuting variables can be applied to the $q$-Gaussians, yielding a (possibly unbounded) operator $r(A_1,\dots,A_d)$. If $r$ is not the zero rational function, then this operator has trivial kernel; i.e., for any self-adjoint $r$ which is different from a constant the corresponding distribution has no atoms.

2) There is no non-zero noncommutative power series $\sum_{w \in[d]^*} \alpha_w A^w$ of radius of convergence $R>\|A_i\|$ such that $\sum_{w \in[d]^*} \alpha_w A^w = 0$.

3) For any self-adjoint noncommutative polynomial $Y=p(A_1,\dots,A_d)$ over $A_1,\ldots,A_d$, the cumulative distribution function $\mathcal{F}_Y$ of the distribution $Y$ is Hölder continuous with exponent $\frac{1}{2^{\mathrm{deg}Y}-1}$ where $\mathrm{deg}Y$ is the degree of $p$.  

4) The $q$-Gaussian operators have finite non-microstates free Fisher information and maximal microstates free entropy dimension,
$$\Phi^*(A_1,\dots,A_d)<\infty,\quad\text{and}\qquad \delta_0(A_1,\dots,A_d)=d.$$

5) $W^*(A_1,\dots,A_d)$ does not have property $\Gamma$, i.e., there is no non-trivial central sequence.

6) $W^*(A_1,\dots,A_d)$ does not have a Cartan subalgebra.

\end{cor}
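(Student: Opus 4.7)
All six items are standard consequences, in the free-probability toolkit, of the existence of a (Lipschitz) conjugate system, so the plan is simply to feed Theorem \ref{main} into the appropriate results from the literature, checking the minor hypotheses in our concrete situation.

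For (4), the finite Fisher information $\Phi^*(A_1,\dots,A_d)<\infty$ is almost by definition: if $(\xi_1,\dots,\xi_d)$ is the conjugate system produced in Theorem \ref{main}, then $\Phi^*(A_1,\dots,A_d)=\sum_{i=1}^d\|\xi_i\|^2$. For the equality $\delta_0(A_1,\dots,A_d)=d$ I would invoke Dabrowski's theorem from \cite{Dabrowski10}, whose hypotheses are a Lipschitz conjugate system (supplied by Theorem \ref{main}) together with Connes embeddability of $W^*(A)$; the latter holds for all $-1<q<1$ since the $q$-deformed Fock space provides a hyperfinitely approximable tracial representation, e.g.\ through the random matrix models of \cite{Sniady01, PS21}.

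For (1), (2), and (3), I would then chain the following applications. Item (1) uses \cite{MSY18, MSY19}: the existence of a conjugate system (and, more strongly, a dual system) realizes the division closure inside the affiliated unbounded operators as the free field, and this in turn forces any non-constant self-adjoint rational function to have a distribution with no atom. Item (2) is exactly Lemma 37 of \cite{Dabrowski10}, whose only input is finite Fisher information on $A_1,\dots,A_d$. Item (3) is a direct application of the main theorem of Banna--Mai \cite{BM20} to $Y=p(A_1,\dots,A_d)$, again taking $\Phi^*<\infty$ as input.

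For (5) and (6), I would use \cite{Dabrowski10JFA} (finite $\Phi^*$ forces the absence of property $\Gamma$) for (5), and Theorem 1.3 of \cite{DL16} (a Lipschitz conjugate system combined with Connes embeddability rules out Cartan subalgebras) for (6). There is really no substantive obstacle in this corollary: its proof is organized as a citation chain, and the actual mathematical work sits in Theorem \ref{main}. The only care needed is to match the precise normalization and quantitative hypotheses required by each cited result with the form of the conjugate and dual systems that Theorem \ref{main} provides, which is routine given the explicit formulas we will derive.
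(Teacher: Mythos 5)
Your proposal is correct and follows essentially the same route as the paper: the corollary is established there purely as a citation chain feeding the existence of the (Lipschitz) conjugate system from Theorem \ref{main} into \cite{MSY18,MSY19} for (1), Lemma 37 of \cite{Dabrowski10} for (2), \cite{BM20} for (3), the definition of $\Phi^*$ plus \cite{Dabrowski10} with Connes embeddability for (4), \cite{Dabrowski10JFA} for (5), and Theorem 1.3 of \cite{DL16} for (6), exactly as you propose.
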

 
Unfortunately, we are not able to use our result for adding anything to the isomorphism problem. However, the fact that the free entropy dimension is maximal for all $q$ in the whole interval is another indication that they might all be isomorphic to the free group factor.

    This paper consists, apart from this Introduction, of five parts. In Section \ref{Preliminaries},
    we give a short introduction to our main objects, namely $q$-Gaussians, conjugate systems, and dual systems; in particular, the general relation between conjugate and dual system is given.
    In Section \ref{OneVariable},
    we consider as a warm-up the one variable case $d=1$ and compute the dual operator and the conjugate variable. This case introduces also some basic qunatities in $q$-combinatorics. In Section \ref{MultiVariable}, we consider the multi-variable case and explain how to represent dual systems by special crossing partitions. In Section \ref{LipschitzConjugate}, we consider the noncommutative partial derivatives of our conjugate system and show the Lipschitz condition for those. In Section \ref{FreeGibbsPotential}, we discuss power series expansions of the conjugate system and the existence of free Gibbs potential. 
\section*{Acknowledgement}
This study started during the stay of the first author at Saarland University as a program of Kyoto University Top Global Project, which the second author hosted. During the visit, Y. Ueda asked the first author about this topic in an online workshop, which motivated us to study the dual system of $q$-Gaussians. We want to thank him for giving us a chance to explore this topic. The first author would like to express his gratitude to his Ph.D. supervisor B. Collins, for encouraging us to write this paper, and to S. Yin and J. Matsuda for checking the proof carefully. We are grateful to an anonymous referee for useful comments, which improved the clarity of the paper. 

A. Miyagawa was supported by JST, the establishment of university fellowships towards the creation of science technology innovation, Grant Number JPMJFS2123 and by JSPS Research Fellowships for Young Scientists, JSPS KAKENHI Grant Number JP 22J12186.

R. Speicher acknowledges support by the SFB-TRR 195 “Symbolic Tools in
Mathematics and their Application” of the German Research Foundation (DFG).

\section{Preliminaries}\label{Preliminaries}
\subsection{q-Gaussian algebras}
In this section, we introduce our main objects in a simple situation. Let $d \in \mathbb{N}$ be a positive integer and $H$ be a $d$-dimensional Hilbert space. We take an orthonormal basis $\{e_i\}_{i=1}^d$ of $H$. We consider the algebraic Fock space,
$$\mathcal{F}_{\mathrm{alg}}(H) = \bigoplus_{n=0}^{\infty} H^{\otimes n} $$
where we take the algebraic orthogonal sum of $H^{\otimes n}$ and where $H^{\otimes 0}$ is a one-dimensional Hilbert space $\mathbb{C} e_0$ with a unit vector (vacuum vector) $e_0$.
In order to describe a basis of $\mathcal{F}_{\mathrm{alg}}(H)$, we use the set $[d]^*$ of words in letters from the alphabet $[d] = \{1,\ldots,d\}$, with the empty word denoted by $0$. Namely we define $e_w$ for each $w= w_1 \cdots w_n \in [d]^*$ by $e_w = e_{w_1}\otimes e_{w_2} \otimes \ldots \otimes e_{w_n}.$ Note that the vacuum vector $e_0$ corresponds to the empty word $w=0$. Then all vectors in $\mathcal{F}_{\mathrm{alg}}(H)$ are represented by finite linear spans of $\{e_w\}_{w \in [d]^*}$
For each word $w \in [d]^*$, we define $|w|$ as the length of $w$. 

For a parameter $-1\le q\le 1$ and each $n \in \mathbb{Z}_{\ge 0}$, we define the operator $P^{(n)}: H^{\otimes n} \to H^{\otimes n} $ by $P^{(0)} = \mathrm{Id}$ and
$$P^{(n)} e_w = \sum_{\pi \in S_n} q^{|\pi|} e_{\pi(w)} $$
for each word of length $|w|=n$. $S_n$ is here the symmetric group of degree $n$ acting on $\{1,\ldots,n\}$ and $\pi(w) = w_{\pi(1)} \ldots w_{\pi(n)}$. We consider the following inner product on the algebraic Fock space:
$$ \langle \xi , \eta \rangle_q = \left \langle \xi , \bigoplus_{n=0}^{\infty} P^{(n)} \eta \right \rangle.$$

We remark that the operator $\bigoplus_{n=0}^{\infty} P^{\otimes n}$ is positive for any $-1\le q \le 1$ and strictly positive for any $-1<q<1$, by Proposition 1 in \cite{BS91}. Thus by taking the completion of the algebraic Fock space with respect to this inner product (where we first divide out the kernel of the inner product in the cases $q=-1,1$), we obtain a Hilbert space, the so-called $q$-deformed Fock space, denoted by $\mathcal{F}_q(H)$. We write its Hilbert norm by $\| \cdot \|_q$

Let us fix in the following $-1 < q <1$. Our main objects are left creation operators $\{l^*_i\}_{i=1}^d \in B(\mathcal{F}_q(H))$ defined by (see Lemma 2 and Lemma 4 in \cite{BS91}) 
$$ l^*_i e_w = e_{i w},$$
and the left annihilation operators $\{l_i\}_{i=1}^d$ defined by their adjoints $ l_i = (l_i^*)^*$(with respect to the $q$-inner product). 

Let us recall the concrete formula for $\{l_i\}_{i=1}^d$. From \cite{BS91}, we can see
$$ l_i e_{j_n j_{n-1} \cdots j_1} = \sum_{k=1}^n q^{n-k} \delta_{i j_k}e_{j_n\cdots \check{j_k} \cdots j_1} $$
where $\check{j_k}$ means to omit $j_k$ and where $\delta_{i j_k}$ is the Kronecker's delta. By using left creation and annihilation operators, $q$-Gaussians are defined as the corresponding field operators. 

\begin{dfn}\label{q-Gaussian}
We define the $q$-Gaussians $\{A_i\}_{i=1}^d$ by
$$A_i = l_i + l_i^* .$$
In addition, we define the $q$-Gaussian algebra $W^*(A)$ as the von Neumann subalgebra of $B(\mathcal{F}_q(H))$ generated by $A=(A_1,\ldots,A_d)$. 
\end{dfn}
By Theorem 4.4 in \cite{BS94}, the vacuum state 
$\tau_q(\cdot) = \langle \ \cdot \ e_0,e_0\rangle_q $
is tracial and faithful for $q$-Gaussian algebras. In addition, the Hilbert space obtained from the GNS-representation of the $q$-Gaussian algebra for the vacuum state is isomorphic to the $q$-deformed Fock space. 
We remark that for each $w \in [d]^*$, there exists a unique non-commutative polynomial $Q[w]$ over $q$-Gaussians such that $Q[w]e_0 = e_w$ (see, for example, Proposition 2.7 in \cite{BKS97}). 
This $Q[w]$ is given by 
$$Q[0]=1,\qquad  Q[j_1]=A_{j_1}$$
and
the following recursion for $j_{n+1},\ldots,j_1 \in [d]$:
$$
Q[j_{n+1}\cdots j_1] = A_{j_{n+1}}Q[j_n \cdots j_1]-\sum_{k=1}^n q^{n-k}Q[j_n \cdots \check{j_k} \cdots j_1]
$$

By linear extension we can thus identify elements in $\mathcal{F}_{\mathrm{alg}}(H)$ with operators in the algebra $C\langle A \rangle $ generated by our Gaussian operators.
Thus there are two different norms which we can apply to vectors $\eta\in \mathcal{F}_{\mathrm{alg}}(H)$: with $\Vert \eta\Vert_q$ we denote the Hilbert space norm of the vector $\eta$, whereas with $\Vert \eta\Vert$ we mean the operator norm of the corresponding operator, i.e., in particular
$\Vert e_w\Vert=\Vert Q[w]\Vert$.
It will be important to estimate those two norms against each other. Whereas 
$$\Vert\eta\Vert_q\leq \Vert \eta\Vert$$
is trivially true as the general comparison between $L_2$ and $L_\infty$ norms, the other direction needs more structure and is given by the following $q$-deformed version of Haagerup's inequality, proved by M. Bożejko (see Proposition 2.1 in \cite{Bozejko99}): for $m\geq 0$ and $\eta\in H^{\otimes m}$ one has
$$ \| \eta\| \le (m+1)C_{|q|}^{\frac{3}{2}} \|\eta\|_q,\qquad
\text{where} \qquad C_q^{-1} = \prod_{m=1}^{\infty} (1-q^m).$$

We will also use the free right annihilation operators $(r_1,\ldots,r_d)$ which are defined by $r_i e_{w j} = \delta_{i j} e_w$ ($w \in [d]^*$, $j \in [d]$) and $r_i e_0 = 0.$ Note that, only if we consider the full Fock space $\mathcal{F}_0(H)$, they are adjoint operators of the right creation operator which maps $e_w$ to $e_{w i}$ for $w \in [d]^*$. In the case $q \neq 0$, they are not adjoints of the right creation operators. The operators $l_i,l_i^*$ behave quite differently than the operators $r_i,r_i^*$; in particular, the latter are not the right version of the former. 
Whereas our operators $l_i$ and $l_j^*$ satisfy the $q$-commutation relations, this is not true for $r_i$ and $r_j^*$; also there is no nice concrete formula for the action of $r_j^*$ on the basic vectors $e_{j_n j_{n-1} \cdots j_1}$ (though, it is at least obvious that the $n$-particle space $H^{\otimes n}$ is mapped into the $(n+1)$-particle space $H^{\otimes (n+1)}$). Thus it is not directly clear how to determine the operator norm of those operators. However, by relying on results of Bożejko, we are able to give an estimate for this in the
following lemma.

\begin{lm}\label{FreeRightAnnihilation}
For $-1<q<1$, the free right annihilation operators $r_1,\ldots,r_d$ are bounded on $\mathcal{F}_q(H)$ with
$$\|r_i\| \le \frac{1}{\sqrt{w(q)}},\qquad
\text{where}\qquad w(q)^2 =(1-|q|^2)^{-1} \prod_{k=1}^{\infty} (1-|q|^k) (1+ |q|^k)^{-1}.$$ 
\end{lm}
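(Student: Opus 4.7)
My approach is to compare the $q$-deformed Fock space $\mathcal{F}_q(H)$ with the full Fock space $\mathcal{F}_0(H)$, on which $r_i$ is obviously a contraction: it is the adjoint of the right creation $\rho_i^\ast : e_w \mapsto e_{wi}$, which is an isometry on each $n$-particle space in the standard inner product, so $\|r_i\|_{\mathcal{F}_0(H)} \le 1$. The only task is therefore to transport this bound through the ``distortion'' between the $q$-inner product and the standard one. Denoting the standard inner product and norm on $H^{\otimes n}$ by $\langle \cdot,\cdot\rangle$ and $\|\cdot\|$, these two inner products are related by $\langle \cdot,\cdot\rangle_q = \langle \cdot , P^{(n)} \cdot\rangle$ on each $H^{\otimes n}$.

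Concretely, for $\eta \in H^{\otimes n}$ I would write
\begin{equation*}
\|r_i \eta\|_q^2 \;=\; \langle r_i \eta,\, P^{(n-1)} r_i \eta\rangle \;\le\; \bigl\|P^{(n-1)}\bigr\|_{\mathrm{op}} \cdot \|r_i \eta\|^2 \;\le\; \bigl\|P^{(n-1)}\bigr\|_{\mathrm{op}} \cdot \|\eta\|^2,
\end{equation*}
where the last inequality uses that $r_i$ is a contraction in the standard inner product, and combine this with the lower bound
\begin{equation*}
\|\eta\|_q^2 \;=\; \langle \eta,\, P^{(n)} \eta\rangle \;\ge\; \lambda_{\min}\!\bigl(P^{(n)}\bigr)\cdot \|\eta\|^2.
\end{equation*}
This yields $\|r_i|_{H^{\otimes n}}\|_q^2 \le \|P^{(n-1)}\|_{\mathrm{op}}/\lambda_{\min}(P^{(n)})$, so uniform boundedness of $r_i$ on $\mathcal{F}_q(H)$ reduces to uniform-in-$n$ control of the spectrum of $P^{(n)}$. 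For this I will invoke Bożejko's estimates from \cite{Bozejko99}, which bound $\|P^{(n)}\|_{\mathrm{op}}$ uniformly from above and $\lambda_{\min}(P^{(n)})$ uniformly from below, each by an infinite product in $|q|$ of exactly the sort appearing in the definition of $w(q)$.

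The main obstacle is extracting the \emph{sharp} constant $1/\sqrt{w(q)}$, and in particular the factor $(1-|q|^2)^{-1}$ in $w(q)^{-2}$, since the crude ratio $\|P^{(n-1)}\|_{\mathrm{op}}/\lambda_{\min}(P^{(n)})$ by itself does not produce this factor. I expect this refinement to come from estimating $P^{(n)}$ restricted to the image of $\rho_i^\ast$ (the subspace of words ending with the letter $i$), on which a tighter bound is available, and then combining the resulting infinite products via the Jacobi-type identity $(1-|q|^k)(1+|q|^k) = 1-|q|^{2k}$ to reassemble them into the compact form $w(q)^{-1/2}$. Once the refined spectral bound is in place, the rest is a routine $q$-Pochhammer computation.
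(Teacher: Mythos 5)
There is a genuine gap here, and it is located earlier than the place where you flag trouble. Your reduction rests on the claim that Bożejko's estimates ``bound $\|P^{(n)}\|_{\mathrm{op}}$ uniformly from above''. No such bound exists: already for $0<q<1$ one has $\|P^{(n)}\|_{\mathrm{op}} \ge \langle e_1^{\otimes n},P^{(n)}e_1^{\otimes n}\rangle = \sum_{\pi\in S_n}q^{|\pi|} = [n]_q!$, which grows like a constant times $(1-q)^{-n}$, since each factor $[k]_q$ tends to $(1-q)^{-1}>1$. What is uniform is only the lower bound $P^{(n)}\ge\prod_{k=1}^\infty(1-|q|^k)\,\mathrm{Id}$, and $\lambda_{\min}(P^{(n)})$ does not grow at the rate $(1-q)^{-n}$ when $d\ge2$ (test $P^{(n)}$ on $e_1\otimes e_2\otimes e_1\otimes e_2\otimes\cdots$, whose $q$-norm grows at a strictly slower exponential rate). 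Hence your ratio $\|P^{(n-1)}\|_{\mathrm{op}}/\lambda_{\min}(P^{(n)})$ diverges exponentially in $n$ and does not even give boundedness of $r_i$, let alone the constant $w(q)^{-1/2}$. The underlying reason is structural: the directions in which $P^{(n-1)}$ is large are essentially the directions in which $P^{(n)}$ is large, so any argument that decouples the two Gram operators and compares each separately to the identity must lose an exponential factor.

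The missing ingredient is a \emph{relative} comparison of consecutive Gram operators, namely Theorem 1 of \cite{Bozejko98}: $P^{(m)}\otimes 1\le w(q)^{-1}P^{(m+1)}$ as operators on $H^{\otimes(m+1)}$, uniformly in $m$. With this, the paper's proof never passes through the undeformed inner product at all: for $\eta=\sum_{|w|=m+1}\alpha_we_w$ one computes $\|r_i\eta\|_q^2=\langle\eta,(P^{(m)}\otimes Q_i)\eta\rangle$, where $Q_i$ is the rank-one projection onto $\mathbb{C}e_i$ in the last tensor slot, and then $P^{(m)}\otimes Q_i\le P^{(m)}\otimes1\le w(q)^{-1}P^{(m+1)}$ gives $\|r_i\eta\|_q^2\le w(q)^{-1}\|\eta\|_q^2$ in one line. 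Your closing idea of restricting attention to the image of the right creation (words ending in the letter $i$) is exactly what the factor $Q_i$ implements, but without the inequality $P^{(m)}\otimes1\le w(q)^{-1}P^{(m+1)}$ the argument cannot be completed; that inequality is the whole content of the lemma, and it cannot be recovered from two-sided spectral bounds on the individual $P^{(n)}$.
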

\begin{proof}
Since $r_i$ respects the orthogonality between different tensor powers in the algebraic Fock space, it suffices to restrict for the norm estimate to a fixed tensor power $m+1 \in \mathbb{N}$; $r_i$
connects then elements in $H^{\otimes (m+1)}$ with elements in $H^{\otimes m}$. 
By Theorem 1 in \cite{Bozejko98}, we have
$$ P^{(m)} \otimes 1 \le w(q)^{-1} P^{(m+1)}$$
and we can estimate
\begin{eqnarray*}
\|r_i \sum_{|w| = m+1} \alpha_w e_w\|^2_q&=& \langle \sum_{|u| = m} \alpha_{ui}e_u, P^{(m)}\sum_{|v|=m}\alpha_{vi}e_v \rangle_{H^{\otimes m}}\\
&=& \sum_{|v| = m} \sum_{\pi \in S_m} \alpha_{\pi(v)i}\overline{\alpha_{vi}} q^{|\pi|} \\
&=& \langle \sum_{|u| = m+1}\alpha_u e_u , (P^{(m)} \otimes Q_i) \sum_{|v| = m+1}\alpha_v e_v \rangle_{H^{\otimes m+1}}
\end{eqnarray*}
where $Q_i \in B(H)$ is the orthogonal projection onto $\mathbb{C}e_i$.

Then we have $P^{(m)} \otimes Q_i \le P^{(m)} \otimes 1 \le w(q)^{-1}P^{(m+1)}$ and 
$$\|r_i \sum_{|w| = m+1} \alpha_w e_w\|^2_q \le w(q)^{-1}\| \sum_{|w| = m+1} \alpha_w e_w\|^2_q.   $$
\end{proof}

\subsection{Conjugate systems}\label{Conjugatesystem}
We recall the notion of a conjugate system, which was introduced in \cite{Voi98} in order to define the non-microstates free Fischer information $\Phi^*$.
Let $(\mathcal{M},\tau)$ be a tracial $W^*$-probability space which consists of a von Neumann algebra $\mathcal{M}$ and a faithful normal tracial state $\tau$. We consider the GNS-construction of $(\mathcal{M},\tau)$ and denote by $L^2(\mathcal{M},\tau)$ the Hilbert space of this representation.

In order to define the conjugate system, we consider the free difference quotients of non-commutative polynomials $$\partial_1,\ldots,\partial_d : \mathbb{C}\langle x_1,\ldots,x_d \rangle \to \mathbb{C}\langle x_1,\ldots,x_d \rangle \otimes \mathbb{C}\langle x_1,\ldots,x_d \rangle$$ 
which are linear operators determined by the Leibniz rule and  
$\partial_i x_j = \delta_{i j} 1\otimes 1.$  

\begin{dfn}
Let $X = (X_1,\ldots,X_d) \in \mathcal{M}^d$ be a $d$-tuple of self-adjoint operators. Then we say that $(\xi_1,\ldots,\xi_d) \in L^2(W^*(X), \tau)^d$ is a conjugate system for $X$ if they satisfy, for all $i=1,\dots,d$,
$$\tau(Q(X) \xi_i) = \tau \otimes \tau [\partial_i Q(X)]$$
for all $Q \in  \mathbb{C}\langle x_1,\ldots,x_d \rangle$ and where $Q(X)$ and $\partial_i Q(X)$ mean the evaluation of $Q$ and $\partial_i Q$, respectively, in $X$.  
\end{dfn}
We remark that the conjugate system is unique if it exists. When $X=(X_1,\ldots,X_d)$ are algebraically free, we can see $\partial_1,\ldots,\partial_d$ as unbounded operators from $L^2(W^*(X),\tau)$ to $ L^2(W^*(X),\tau) \otimes L^2(W^*(X),\tau)$ whose domains are $\mathbb{C}\langle X \rangle$. Then the existence of the conjugate system is equivalent to $1\otimes 1 \in \mathrm{dom}(\partial_i^*)$ for each $i \in [d]$, and the conjugate system is given by $\xi_i = \partial_i^*(1\otimes 1).$ 
If the conjugate system exists, we can see $\partial_1,\ldots,\partial_d$ are closable operators (see Corollary 4.2 in \cite{Voi98}). We say the conjugate system $(\xi_1,\ldots,\xi_d)$ is Lipschitz conjugate (see Definition 1 in \cite{Dabrowski10} or Section 2.4 in \cite{BM20}) if $\xi_i \in \mathrm{dom}(\overline{\partial_j})$ and $\overline{\partial_j} \xi_i \in W^*(X) \overline{\otimes} W^*(X)$ for any $i,j \in [d]$, where $\overline{\partial_j}$ is the closure of $\partial_j$ and we consider a von Neumann algebra tensor product for $W^*(X) \overline{\otimes} W^*(X)$. 

The non-microstates free Fisher information $\Phi^*(X_1,\ldots,X_d)$ of a tuple of self-adjoint elements $X=(X_1,\ldots,X_d)$ is defined by 
$$\Phi^*(X) = \begin{cases}\sum_{i=1}^d \| \xi_i \|^2& \text{if there exists a conjugate system  $(\xi_1,\ldots,\xi_d)$,}\\ \infty & \mathrm{otherwise}.  \end{cases}$$

The non-microstates free Fisher information is used to define the non-microstates free entropy $\chi^*(X)$ and the non-microstates free entropy dimension $\delta^*(X)$. We also remark that $\Phi^*(X)<\infty$ implies $-\infty < \chi^*(X)$ and $\delta^*(X)=d.$

A typical distribution with finite free Fisher information is given by $d$ free semicircular elements; this is the distribution of the $q$-Gaussians for the special case $q=0$.
In this case, the conjugate system for $A=(A_1,\ldots,A_d)$ is $A$ itself and $\Phi^*(A)=d.$ 
\subsection{Dual systems}\label{Dualsystem}
In \cite{Voi98}, D. Voiculescu also introduced the notion of a dual system. In this paper, we present the definition of dual systems according to \cite{Shlyakhtenko06}.
\begin{dfn}
Let $X = (X_1,\ldots,X_d) \in \mathcal{M}^d$ be a $d$-tuple of self-adjoint operators in a tracial $W^*$-probability space $(\mathcal{M},\tau)$.
Then a normalized dual system $(D_1,\ldots,D_d)$ is a tuple of unbounded operators on $L^2(W^*(X),\tau)$ whose domains contain $\mathbb{C}\langle X \rangle $ and which satisfy $D_i 1 = 0$, $1 \in \mathrm{dom}(D_i^*)$, and 
$$[D_i,X_j] = \delta_{i j}P $$
on $\mathbb{C}\langle X \rangle$ for any $i,j \in [d]$; where $P$ is the orthogonal projection onto $\mathbb{C} 1 \subset L^2(W^*(X),\tau).$  
\end{dfn}
In the free semicircular case (i.e., $q=0$), the free right annihilation operators $(r_1,\ldots,r_d)$ form a normalized dual system of $(A_1,\ldots,A_d)$. 

We put  ``normalized'' in the definition of dual systems since $D_i 1 = 0$ is an additional requirement compared to the original definition.
This assumption allows us to simplify our computations and is motivated by the following theorem, which tells us the relation between normalized dual systems and conjugate systems.

\begin{thm}[Theorem 1 in \cite{Shlyakhtenko06}]\label{shlyakhtenko}
For self-adjoint elements $X = (X_1,\ldots,X_d) \in \mathcal{M}^d$, the existence of a conjugate system $(\xi_1,\ldots,\xi_d)$ is equivalent to the existence of a normalized dual system $(D_1,\ldots,D_d)$. In this case we have for each $i \in [d]$
$$\xi_i =  D_i^* 1.$$
 \end{thm}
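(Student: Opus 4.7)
The plan is to reduce both directions to a single ``bridge identity''
$$
D_i \bigl( Q(X) \cdot 1 \bigr) = \bigl[ (\mathrm{id}\otimes \tau) \partial_i Q(X) \bigr] \cdot 1 \qquad \text{for every } Q \in \mathbb{C}\langle x_1, \ldots, x_d\rangle,
$$
which describes how any candidate dual operator must act on the dense subspace $\mathbb{C}\langle X\rangle \cdot 1 \subset L^2(W^*(X),\tau)$.

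For the direction ``dual $\Rightarrow$ conjugate'' I would start from a normalized dual system $(D_1,\ldots,D_d)$ and first establish the bridge identity by induction on the length of $Q$. The base case $Q = 1$ is immediate from $D_i 1 = 0$ and $\partial_i 1 = 0$. For the inductive step, write $Q = x_j Q'$ and expand using $[D_i,X_j] = \delta_{ij} P$ and $P(Q'(X) \cdot 1) = \tau(Q'(X)) \cdot 1$; on the other side the Leibniz rule for $\partial_i$ produces the matching terms $\delta_{ij} \tau(Q'(X)) + X_j (\mathrm{id}\otimes \tau)(\partial_i Q')$. Once the identity is in hand, set $\xi_i := D_i^* 1 \in L^2$, which is legal since $1 \in \mathrm{dom}(D_i^*)$ by hypothesis. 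For any polynomial $Q$,
$$
\tau(Q(X) \xi_i) = \langle \xi_i, Q(X)^* \cdot 1\rangle = \langle 1, D_i(Q(X)^* \cdot 1)\rangle,
$$
and applying the bridge identity with $Q^*$ in place of $Q$, followed by a short involution bookkeeping using $\partial_i(Q^*) = (\partial_i Q)^\dagger$ (with $(a\otimes b)^\dagger := b^*\otimes a^*$) and $\tau(y^*) = \overline{\tau(y)}$, yields $\tau \otimes \tau(\partial_i Q(X))$, which is the conjugate system relation.

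For the reverse direction I would start from a conjugate system $(\xi_1,\ldots,\xi_d)$ and invoke Voiculescu's result that $\Phi^*(X) < \infty$ forces $X_1,\ldots,X_d$ to be algebraically free, so that $\partial_i$ is unambiguous on $\mathbb{C}\langle X\rangle$. I would then \emph{define} $D_i$ on $\mathbb{C}\langle X\rangle \cdot 1$ by the bridge identity itself. The commutator $[D_i, X_j] = \delta_{ij} P$ comes out of the same Leibniz computation as in the first direction, and $D_i \cdot 1 = 0$ is immediate. Finally,
$$
\langle D_i(Q(X) \cdot 1), 1\rangle = \tau \otimes \tau(\partial_i Q(X)) = \tau(Q(X) \xi_i) = \langle Q(X) \cdot 1, \xi_i\rangle
$$
shows $1 \in \mathrm{dom}(D_i^*)$ and $D_i^* 1 = \xi_i$; here the middle equality is the defining conjugate-system identity, and the last uses self-adjointness of $\xi_i$, itself a consequence of applying that identity to both $Q$ and $Q^*$.

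The main obstacle I expect is the involution bookkeeping in the first direction: one has to verify that the two sources of conjugation---one from $\partial_i Q^*$ versus $\partial_i Q$ and one from $\tau(y^*) = \overline{\tau(y)}$---cancel exactly. A secondary technical point is that the definition of $D_i$ in the second direction is unambiguous on polynomial vectors only because of algebraic freeness, which must be imported as a consequence of the existence of the conjugate system.
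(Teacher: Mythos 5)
Your proposal is correct, and for the direction the paper actually proves (normalized dual system $\Rightarrow$ conjugate system) it is essentially the paper's own argument in a different packaging: the paper commutes $D_i$ step by step through $X_{j_1}\cdots X_{j_n}$ inside $\langle 1, D_i X_{j_1}\cdots X_{j_n}1\rangle$, collecting the terms $\delta_{ij_k}\tau(\cdots)\tau(\cdots)$ and killing the last term with $D_i1=0$; your ``bridge identity'' $D_i(Q(X)1)=[(\mathrm{id}\otimes\tau)\partial_iQ(X)]\cdot 1$ is exactly the closed form of that telescoping computation, and your involution bookkeeping via $\partial_i(Q^*)=(\partial_iQ)^\dagger$ and $\tau(y^*)=\overline{\tau(y)}$ checks out. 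Where you go beyond the paper is the converse direction, which the paper does not prove but defers to Shlyakhtenko; your construction (define $D_i$ by the bridge identity, verify the commutator by the Leibniz rule, and get $D_i^*1=\xi_i$ from the conjugate-system relation together with $\xi_i=\xi_i^*$) is the standard one and is sound. The one point to tighten there is the well-definedness step: the fact that a finite non-microstates free Fisher information forces the absence of algebraic relations among $X_1,\ldots,X_d$ is not in Voiculescu's original paper (which essentially assumes algebraic freeness, or works with abstract polynomials); it is a later result (Mai--Speicher--Weber; cf.\ also Lemma 37 of Dabrowski cited in this paper for the power-series version). You correctly identify this as the crux of making $D_i$ well defined on $\mathbb{C}\langle X\rangle\cdot 1$, but the attribution should be fixed, or alternatively one can sidestep it by phrasing the converse entirely in terms of the abstract polynomial algebra as Shlyakhtenko does.
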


For the reader's convenience, let us give a proof of the relevant direction, from the dual system to the conjugate system. So let us compute $$\tau(X_{j_n}\cdots X_{j_1}D_i^*1)=\langle X_{j_n}\cdots X_{j_1}D_i^*1,1\rangle$$ for $j_n,\ldots,j_1 \in [d]$. Note that $X_1,\ldots,X_d$ are self-adjoint, and we have
\begin{align*}
\langle X_{j_n}\cdots X_{j_1}&D_i^* 1, \ 1\rangle
= \langle 1 ,D_i X_{j_1}\cdots X_{j_n}\rangle \\
  &= \langle 1,  X_{j_1}D_i X_{j_{2}} \cdots  X_{j_n}\rangle
   + \delta_{i j_1}\overline{\tau(X_{j_{2}} \cdots X_{j_n})} \\
   &= \cdots \\
   &=\sum_{k=1}^n \delta_{i j_k} \overline{\tau(X_{j_1}\cdots X_{j_{k-1}})\tau(X_{j_{k+1}}\cdots X_{j_n})} + \langle 1, X_{j_1}\cdots X_{j_n} D_i 1\rangle \\
   &= \sum_{k=1}^n \delta_{i j_k} \tau(X_{j_n}\cdots X_{j_{k+1}})\tau(X_{j_{k-1}}\cdots X_{j_1}) + \langle 1, X_{j_1}\cdots X_{j_n} D_i 1\rangle,
   \end{align*}
   where the last term is equal to $0$ since we required $D_i 1 = 0$. This implies that $(D_1^*1,\ldots,D_d^*1)$ forms the conjugate system.
   
   We remark that the condition $1 \in \mathrm{dom}(D_i^*)$ implies that $\mathbb{C}\langle X \rangle \subset \mathrm{dom}(D_i^*)$ and hence that $D_i$ is a closable operator. This can be seen by a similar computation as above, for $Q(X) \in \mathbb{C}\langle X \rangle $ and $ j_n,\ldots,j_1 \in [d]$:
   \begin{eqnarray*}
   \langle D_i Q(X) , X_{j_n}\cdots X_{j_1} \rangle &=& \langle D_i X_{j_1}\cdots X_{j_n}Q(X),1 \rangle \\
   &\ & -\sum_{k=1}^n \tau(X_{j_1} \cdots X_{j_{k-1}}) \langle P X_{j_{k+1}} \cdots X_{j_n} Q(X), 1 \rangle
   \end{eqnarray*}
where each term is a bounded operator with respect to $Q(X)$, since $X \in \mathcal{M}^d$ and $1 \in \mathrm{dom}(D_i^*).$ Thus $\mathbb{C}\langle X \rangle \subset \mathrm{dom}(D_i^*)$.  

In this paper, we will show the existence of a conjugate system from the existence of a dual system according to this theorem. In the $q$-Gaussian case, since $L^2(W^*(A),\tau_q)$ is isomorphic to the $q$-deformed Fock space $\mathcal{F}_q(H)$, we compute the dual system and the conjugate system as objects of $\mathcal{F}_q(H)$. The abstract embedding $W^*(A) \ni T \mapsto T 1 \in L^2(W^*(A),\tau_q)$ from the GNS-construction is here concretely realized  as the linear map $W^*(A) \ni T \mapsto T e_0 \in  \mathcal{F}_q(H)$.
 Via this map, we can identify $\mathbb{C}\langle A \rangle$ with $\mathcal{F}_{\mathrm{alg}}(H)$, since any $e_w$ can be written as $e_w = Q[w]e_0$ for some non-commutative polynomial $Q[w]$ in $A$.

\section{One variable case}\label{OneVariable}
 Consider first the case where $H = \mathbb{C}e$ with a unit vector $e$. Then $\{e_n\}_{n=0}^{\infty}$ defined by 
 $$e_n = e \otimes e \otimes \cdots \otimes e = e^{\otimes n}$$ 
 forms an orthonormal basis of $\mathcal{F}_{\mathrm{alg}}(H)$.
 Then the $q$-deformed inner product is determined by $\langle e_n,e_m\rangle_q = \delta_{n m}[n]_q!$ where
 $$[n]_q = \frac{1-q^{n}}{1-q}, \ \ [n]_q! = [n]_q\cdot [n-1]_q \cdots [2]_q \cdot [1]_q.  $$
 
Note that our $q$-Gaussian $A$ satisfies 
$$A e_n = e_{n+1} + [n]_q e_{n-1}.  $$

We would like to find an operator $D$ such that 
$D e_0 = 0$ and $[D,A] = P^{(0)}$.

Then $D$ needs to satisfy for $n \ge 1$
$$ D A e_n = A D e_n,$$
and then we have
$$ D e_{n+1} = -[n]_q D e_{n-1}+A D e_n. $$
Thus we obtain a recursion for $D e_n$ where $D e_0 =0$ and $D e_1 = e_0$.

\begin{exam}
For example, from this recursion we can compute
\begin{eqnarray*}
D e_2 &=& e_1\\
D e_3 &=& e_2 - q e_0 \\
D e_4 &=& e_3 - q(1+q) e_1 \\
D e_5 &=& e_4 - q (1+q+q^2) e_2 + q^3(1+q)e_0 \\
D e_6 &=& e_5 -q(1+q^2 + q^3)e_3 + q^3 (1+q) (1+q+q^2)e_1. 
\end{eqnarray*}
\end{exam}

\begin{rem}\label{q1}
In the case $q = -1$, we cannot define a linear operator $D$ by using the recursion, since $e_2=0$ in $\mathcal{F}_{-1}(H)$ while $D e_2 = e_1 \neq 0.$  
\end{rem}

Those examples suggest the following general 
explicit formula for $D e_n$.
\begin{prop}\label{ConjugateVariable}
We define
an unbounded operator $D$ with the domain $\mathcal{F}_{\mathrm{alg}}(H)$ by linear extension of
$De_0=0$ and
$$De_n =\sum_{k=1}^{\lceil \frac{n}{2} \rceil} (-1)^{k-1} q^{\frac{k(k-1)}{2}} P_q(n-k,k-1) e_{n-2k+1}  $$
for $n\in \mathbb{N}$,
where $\lceil x \rceil$ is the ceiling function and $P_q(n,k) = \frac{[n]_q!}{[n-k]_q!}$. Then $D$ satisfies
$[D,A]= P^{(0)}$ on $\mathcal{F}_{\mathrm{alg}}(H).$
\end{prop}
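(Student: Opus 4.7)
The plan is to reduce the identity $[D,A]=P^{(0)}$ on the algebraic Fock space to a three-term recursion for $De_n$, and then verify that the closed formula solves that recursion by a direct induction.

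First, I would observe that the commutator identity $[D,A]=P^{(0)}$, together with $De_0=0$, is equivalent to a recursion for $De_n$. Indeed, applying $[D,A]=P^{(0)}$ to $e_n$ and using $Ae_n=e_{n+1}+[n]_q e_{n-1}$ gives
\begin{equation*}
De_{n+1}+[n]_q De_{n-1}=ADe_n\quad (n\geq 1),
\end{equation*}
while $[D,A]e_0=De_1=e_0$ fixes the initial data $De_0=0$, $De_1=e_0$. Since this recursion (together with the initial data) determines $De_n$ uniquely, the proposition reduces to checking that the proposed closed formula satisfies $De_0=0$, $De_1=e_0$, and the displayed three-term recursion above.

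The initial cases are immediate: the formula gives $De_1=q^0P_q(0,0)e_0=e_0$, and $De_0=0$ by the empty sum convention. For the inductive step, I would substitute the closed formula into the right-hand side. Expanding $ADe_n$ using $Ae_{n-2k+1}=e_{n-2k+2}+[n-2k+1]_q e_{n-2k}$, the coefficient of $e_{n-2k+2}$ on the right-hand side receives one contribution from the $l_i^*$-part of $A$ at summation index $k$ and, for $k\geq 2$, one contribution from the $l_i$-part at index $k-1$, together with a contribution from $-[n]_q De_{n-1}$ at index $k-1$. Matching this to the coefficient $(-1)^{k-1}q^{k(k-1)/2}P_q(n+1-k,k-1)$ on the left-hand side reduces the whole recursion, after cancellation of a common sign and power of $q$, to the single numerical identity
\begin{equation*}
q^{k-1}P_q(n+1-k,k-1)-q^{k-1}P_q(n-k,k-1)=P_q(n-k+1,k-2)[n-2k+3]_q-[n]_q P_q(n-k,k-2),
\end{equation*}
for each admissible $k\geq 2$. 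This in turn follows from the elementary $q$-identity $[a+b]_q=[a]_q+q^a[b]_q$ (equivalently, $[n+1-k]_q-[n-k]_q=q^{n-k}$) applied to pull out the factor $[n+1-k]_q$ from $P_q(n+1-k,k-1)$, together with a second application to express $[n]_q$ in terms of $[n-2k+3]_q$; both sides then collapse to the same monomial in $q$-numbers with factor $P_q(n-k,k-2)$. A parallel, slightly simpler verification handles the top term $e_n$ (coming only from the $k=1$ summand on both sides) and the bottom term (the highest $k$), where ceiling-function parity issues must be tracked.

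The main obstacle is purely the $q$-combinatorial bookkeeping in the induction step, in particular keeping track of how the range $1\leq k\leq \lceil n/2\rceil$ shifts to $1\leq k\leq \lceil (n+1)/2\rceil$ when $n$ changes parity, so that no boundary term is missed. Once one writes out the coefficient of each $e_{n-2k+2}$ carefully, the remaining work is a mechanical application of the identity $[a+b]_q=[a]_q+q^a[b]_q$, and no deeper input is needed. The formula is therefore essentially forced by the recursion, and the explicit coefficient $q^{k(k-1)/2}$ — quadratic in $k$ — arises naturally from iterating this identity, which is precisely the feature flagged in the introduction as the source of convergence for all $-1<q<1$.
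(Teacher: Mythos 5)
Your proposal is correct and follows essentially the same route as the paper: both reduce $[D,A]=P^{(0)}$ to the three-term recursion $De_{n+1}=ADe_n-[n]_qDe_{n-1}$ with $De_0=0$, $De_1=e_0$, and then verify the closed formula by induction via coefficient matching and the elementary identity $[a]_q-[b]_q=-q^a[b-a]_q$ (the paper organizes the induction by parity, stepping from $2m-2,2m-1$ to $2m$, while you track the ceiling bounds for general $n$, but the computation is the same). One small correction: your displayed key identity should read $q^{k-1}\bigl(P_q(n-k,k-1)-P_q(n+1-k,k-1)\bigr)$ on the left (both sides then equal $-q^{n-k+1}[k-1]_qP_q(n-k,k-2)$); as written the two sides are negatives of each other.
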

\begin{proof}
We prove this by induction of $n$. Suppose we have the formula for $n=2m-2, 2m-1$. then we compute 
\begin{align*}
 -[2m-1]_q D &e_{2m-2} + A D e_{2m-1} \\
 &=-[2m-1]_q \sum_{k=1}^{m-1} (-1)^{k-1} q^{\frac{k(k-1)}{2}} P_q(2m-k-2,k-1) e_{2m-2k-1} \\
 & \qquad + \sum_{k=1}^m (-1)^{k-1} q^{\frac{k(k-1)}{2}} P_q(2m-k-1,k-1) A e_{2m-2k} \\ 
 &= \sum_{k=1}^m (-1)^{k-1} q^{\frac{k(k-1)}{2}} \ \{-[2m-1]_q P_q(2m-k-2,k-1) \\ 
 & \qquad+ [2m-2k]_q P_q(2m-k-1,k-1) \} e_{2m-2k-1} \\
 &  \qquad+ \sum_{k=1}^m (-1)^{k-1} q^{\frac{k(k-1)}{2}} P_q(2m-k-1,k-1) e_{2m-2k +1} 
 \end{align*}
 We also have
 \begin{align*}
 & -[2m-1]_q P_q(2m-k-2,k-1) + [2m-2k]_q P_q(2m-k-1,k-1)\\ 
&\qquad\qquad= (-[2m-1]_q [2m-2k]_q + [2m-k-1]_q[2m-2k]_q)P_q(2m-k-2,k-2) \\
 &\qquad\qquad= - q^{2m-k-1} [k]_q [2m-2k]_q P_q(2m-k-2,k-2).
 \end{align*}
 Thus we have
 \begin{eqnarray*}
 & \ & -[2m-1]_q D e_{2m-2} + A D e_{2m-1} \\ 
 &=& \sum_{k=1}^{m-1} (-1)^{k} q^{\frac{k(k+1)}{2}} q^{2m-2k-1} [k]_q [2m-2k]_q P_q(2m-k-2,k-2) e_{2m-2k-1} \\
 & \ & + \sum_{k=1}^m (-1)^{k-1} q^{\frac{k(k-1)}{2}} P_q(2m-k-1,k-1) e_{2m-2k +1} \\
 &=& e_{2m-1} + \sum_{k=2}^m (-1)^{k-1} q^{\frac{k(k-1)}{2}} \{P_q(2m-k-1,k-1) \\
 &\ & + q^{2m-2k+1} [k-1]_q [2m-2k+2]_q P_q(2m-k-1,k-3)\} e_{2m-2k +1}. \\
 \end{eqnarray*}
 This is equal to $D e_{2m}$ since we have
 \begin{eqnarray*}
& \ & P_q(2m-k-1,k-1)+ q^{2m-2k+1} [k-1]_q [2m-2k+2]_q P_q(2m-k-1,k-3)\\
 &=& ([2m-2k+2]_q[2m-2k+1]_q+ q^{2m-2k+1} [k-1]_q [2m-2k+2]_q)P_q(2m-k-1,k-3)\\
 &=& [2m-2k+2]_q[2m-k]_q P_q(2m-k-1,k-3) \\
 &=& P_q(2m-k,k-1). 
 \end{eqnarray*}
 Similarly, we obtain $ D e_{2m+2} = -[2m+1]_q D e_{2m}+A D e_{2m+1},$ which implies $[D,A] e_n = P^{(0)}e_n $
\end{proof}

Since $\langle D e_n, e_0 \rangle_q $ is $(-1)^{m-1} q^{\frac{m(m-1)}{2}} [m-1]_q!$ when $n = 2m-1$, and $0$ when $n=2m$, we can formally compute $D^*e_0$ as
$$D^*e_0 = \sum_{m=1}^{\infty}(-1)^{m-1}q^{\frac{m(m-1)}{2}}\frac{[m-1]_q!}{[2m-1]_q!}e_{2m-1},$$
and the square of its Hilbert norm is
$$\|D^*e_0\|^2 = \sum_{m=1}^{\infty} |q|^{m(m-1)}\frac{([m-1]_q!)^2}{[2m-1]_q!}$$  
which is finite for $-1 < q \le 1$ by the ratio test. This implies $e_0 \in \mathrm{dom}(D^*)$. Let us collect this in the following corollary.

\begin{cor}\label{cor:xi}
For all $-1<q<1$, the vacuum vector $e_0$ is in the domain of the adjoint of the normalized dual operator and the conjugate variable for the $q$-Gaussian is given by
$$\xi=D^*e_0 = \sum_{m=1}^{\infty}(-1)^{m-1}q^{\frac{m(m-1)}{2}}\frac{[m-1]_q!}{[2m-1]_q!}e_{2m-1}.$$
\end{cor}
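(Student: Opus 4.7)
The statement follows directly from the explicit recursion/formula for $D$ in Proposition \ref{ConjugateVariable} together with Theorem \ref{shlyakhtenko}, so the plan is essentially to formalize the heuristic computation preceding the corollary.

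First, I would compute $\langle De_n,e_0\rangle_q$ for each $n$ using the formula
\[
De_n=\sum_{k=1}^{\lceil n/2\rceil}(-1)^{k-1}q^{\frac{k(k-1)}{2}}P_q(n-k,k-1)\,e_{n-2k+1}.
\]
Since $\{e_n\}_{n\geq 0}$ is $q$-orthogonal with $\langle e_n,e_m\rangle_q=\delta_{n m}[n]_q!$, only the index $k$ with $n-2k+1=0$ contributes. Hence $\langle De_n,e_0\rangle_q=0$ for $n$ even, and for $n=2m-1$ only $k=m$ survives, giving
\[
\langle De_{2m-1},e_0\rangle_q=(-1)^{m-1}q^{\frac{m(m-1)}{2}}P_q(m-1,m-1)=(-1)^{m-1}q^{\frac{m(m-1)}{2}}[m-1]_q!.
\]

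Next, I would verify that $e_0\in\mathrm{dom}(D^*)$ by producing the candidate $\eta=D^*e_0$ explicitly. Define
\[
\eta:=\sum_{m=1}^{\infty}(-1)^{m-1}q^{\frac{m(m-1)}{2}}\frac{[m-1]_q!}{[2m-1]_q!}\,e_{2m-1}.
\]
Since $m(m-1)$ is always even, the squared Hilbert norm is
\[
\|\eta\|_q^{2}=\sum_{m=1}^{\infty}|q|^{m(m-1)}\frac{([m-1]_q!)^{2}}{[2m-1]_q!}.
\]
I would check finiteness of this series by the ratio test: the ratio of consecutive terms is $|q|^{2m}[m]_q^{2}/([2m+1]_q[2m]_q)$, and since $[n]_q\to(1-q)^{-1}$ for $-1<q<1$, this ratio tends to $0$. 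Thus $\eta\in\mathcal{F}_q(H)$.

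Finally, by construction $\langle\eta,e_n\rangle_q=\langle e_0,De_n\rangle_q$ for every basis vector $e_n$, and by linearity the equality extends to all $\phi\in\mathcal{F}_{\mathrm{alg}}(H)=\mathrm{dom}(D)$. Therefore the functional $\phi\mapsto\langle e_0,D\phi\rangle_q$ is bounded (implemented by $\eta$), which by definition means $e_0\in\mathrm{dom}(D^*)$ and $D^*e_0=\eta$. Combining with Theorem \ref{shlyakhtenko} identifies $\xi=D^*e_0$ as the conjugate variable. There is no real obstacle here: the only point requiring care is ensuring convergence of the series for $\|\eta\|_q^{2}$, where the decisive feature — as emphasized in the introduction — is the quadratic exponent $q^{m(m-1)/2}$ coming from the recursion in Proposition \ref{ConjugateVariable}, which dominates the polynomial growth of $([m-1]_q!)^{2}/[2m-1]_q!$ for every $q\in(-1,1)$.
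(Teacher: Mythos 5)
Your proposal is correct and follows essentially the same route as the paper: read off $\langle De_n,e_0\rangle_q$ from the explicit formula in Proposition \ref{ConjugateVariable} (only the $k$ with $n-2k+1=0$ survives), write down the resulting candidate for $D^*e_0$, verify $\|D^*e_0\|_q^2=\sum_m |q|^{m(m-1)}([m-1]_q!)^2/[2m-1]_q!<\infty$ by the ratio test, and invoke Theorem \ref{shlyakhtenko}. The paper's own argument is exactly this computation, stated slightly more tersely.
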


\begin{rem}
We remark that the polynomial corresponding to $e_n$ is the $n$th $q$-Hermite polynomial, $Q[e_n] =H_n(x|q)$. There is a relation between $q$-Hermite polynomials and the Chebyshev polynomials $U_n$ of the second kind, which is the $q=0$ version of $q$-Hermite. This can be stated as follows (for example, see Lemma 5.57 in \cite{CI20}),
$$U_n(x\sqrt{1-q}) = \sum_{k=0}^{\lfloor n/2 \rfloor} (-1)^k q^{\frac{(k+1)k}{2}} \binom{n-k}{k}_q \sqrt{1-q}^{n-2k}H_{n-2k}(x|q)$$
where 
$$\binom{n-k}{k}_q = \frac{[n-k]_q!}{[k]_q![n-2k]_q!}. $$
In particular, by computing the coefficient of $H_0(x|q) = 1$, we have 
$$\tau_q(U_{2n}(A\sqrt{1-q})) = (-1)^n q^{\frac{(n+1)n}{2}}$$ and $\tau_q(U_{2n-1}(A\sqrt{1-q}))=0$ for any $n \in \mathbb{N}.$

In the one-variable case, there is actually a general formula for the conjugate variable $\xi$ in terms of Chebyshev polynomials (provided this sum converges), namely (see Exercise 8.12 in \cite{MS17})
$$\xi = \sum_{n=1}^{\infty} \tau(U_{n-1}(X))C_n(X)$$
where $C_n(X)$ is the Chebyshev polynomial of the first kind.
If we apply this formula to $X=A\sqrt{1-q}$ and recall that a rescaling of the random variable by a factor $\alpha$ results in the rescaling of the conjugate variable by a factor $1/\alpha$, then we obtain from this general formula the following expression for the conjugate variable of the $q$-Gaussian:
$$\sqrt{1-q}\sum_{n=0}^{\infty} (-1)^{n}q^{\frac{(n+1)n}{2}} C_{2n+1}(A\sqrt{1-q}).$$
We can transform this formula into the sum of $q$-Hermite polynomials by using $C_1 (X)= U_1(X)=X$, $C_n(X) = U_n(X) - U_{n-2}(X)$ for $n \ge 2$ and the relation from above between $U_n$ and $H_n(x|q)$, resulting after some reformulations in
$$\sum_{m=0}^{\infty}(-1)^m q^{\frac{(m+1)m}{2}} (1-q)^{m+1} \sum_{n=m}^{\infty} q^{(n+1)(n-m)}(1+ q^{n+1})\binom{n+m+1}{n-m}_q e_{2m+1}.$$
By using the non-trivial identity
$$(1-q)^{m+1} \sum_{n=m}^{\infty} q^{(n+1)(n-m)}(1+ q^{n+1})\binom{n+m+1}{n-m}_q = \frac{[m]_q!}{[2m+1]_q!} $$
for all $m \in \mathbb{Z}_{\ge 0}$, we recover thus indeed our formula from Cor. \ref{cor:xi}.

\end{rem}

\section{Multi-variable case}\label{MultiVariable}
A similar reduction gives us a formula for the multi-variable case. 
Let us consider the $q$-deformed Fock space $\mathcal{F}_q(H)$ of a $d$-dimensional Hilbert space $H$ with orthonormal basis $e_1,\ldots,e_d$. Let $\{A_i\}_{i=1}^d$ be $q$-Gaussians with respect to $\{e_i\}_{i=1}^d$. 

Then the equation $[D_i, A_j] = \delta_{i j}P^{(0)}$, together with $D_i e_0=0$, allows us to determine $D_i$ inductively. 
\begin{exam}
Since we have $e_{j w}=A_j e_w - l_j e_w$ for $j \in [d]$ and $w \in [d]^*$, we have by applying $D_i$ and $[D_i, A_j] = \delta_{i j}P^{(0)}$,
$$D_i e_{j w}=D_i A_j e_w - D_i l_j e_w=A_j D_i e_w + \delta_{ij}P^{(0)}e_w-D_i l_j e_w.$$
This gives the recursion for $D_i e_w$ with $D_i e_0=0$. For example, we have for $i,j_1,j_2,j_3 \in [d]$,
\begin{eqnarray*}
D_i e_{j_1} &=& A_{j_1}D_i e_0 + \delta_{i j_1} P^{(0)} e_0 = \delta_{i j_1}e_0\\
D_i e_{j_2j_1}&=&A_{j_2}D_i e_{j_1}-\delta_{j_2j_1}D_i e_0= \delta_{i j_1} e_{j_2}\\
D_i e_{j_3 j_2 j_1} &=&A_{j_3}D_i e_{j_2 j_1}-\delta_{j_3j_2}D_i e_{j_1}-q\delta_{j_3j_1}D_i e_{j_2}\\
&=&\delta_{i j_1}A_{j_3}e_{j_2}-\delta_{j_3j_2}\delta_{i j_1}e_0-q\delta_{j_3j_1}\delta_{i j_2}e_0 \\
&=&\delta_{i j_1}e_{j_3 j_2} - q \delta_{i j_2}\delta_{j_3 j_1}e_0.
\end{eqnarray*}
Similarly, we obtain the following formulas for $j_4,j_5,j_6 \in [d]$,
\begin{eqnarray*}
D_i e_{j_4j_3j_2j_1} &=& \delta_{i j_1} e_{j_4 j_3 j_2} - q^2 \delta_{i j_2} \delta_{j_4 j_1} e_{j_3} - q \delta_{i j_2} \delta_{j_3 j_1}e_{j_4}\\
D_i e_{j_5j_4j_3j_2j_1} &=&
\delta_{i j_1}e_{j_5j_4j_3j_2}-\delta_{i j_2}(q^3\delta_{j_5j_1}e_{j_4 j_3} + q^2 \delta_{j_4j_1}e_{j_5j_3} + q \delta_{j_3j_1}e_{j_5j_4})
\\ &\ &+ \delta_{i j_3}(q^4 \delta_{j_5 j_1} \delta_{j_4 j_2} + q^3 \delta_{j_5j_2} \delta_{j_4j_1}) e_0 \\ 
D_i e_{j_6j_5j_4j_3j_2j_1}
&=& \delta_{i j_1}e_{j_6 j_5 j_4 j_3 j_2} \\ 
&\ & - \delta_{i j_2}
(q^4 \delta_{j_6j_1} e_{j_5j_4j_3} + q^3 \delta_{j_5 j_1} e_{j_6 j_4 j_3} + q^2 \delta_{j_4 j_1} e_{j_6 j_5 j_3} + q \delta_{j_3 j_1} e_{j_6j_5j_4})
\\ & \ & + \delta_{i j_3} (q^6 \delta_{j_6 j_1} \delta_{j_5 j_2} e_{j_4} +q^5 \delta_{j_6 j_1} \delta_{j_4 j_2} e_{j_5}
+ q^5 \delta_{j_6 j_2}\delta_{j_5 j_1}e_{j_4}  \\ & \ &
+q^4 \delta_{j_6 j_2} \delta_{j_4 j_1}e_{j_5}
+q^4 \delta_{j_5 j_1} \delta_{j_4 j_2}e_{j_6}
+ q^3 \delta_{j_5 j_2}\delta_{j_4 j_1}e_{j_6}).
\end{eqnarray*}
\end{exam}

\begin{rem}
As in Remark \ref{q1}, we cannot define a linear operator $D_i$ in the case $q=-1$. But now the case $q=1$ also has to be excluded if $d \ge 2$. 
For example, we have $e_1 \otimes e_2 - e_2 \otimes e_1 = 0 $ in $\mathcal{F}_1(H)$, but
$$D_1(e_1 \otimes e_2 - e_2 \otimes e_1) = -e_2 \neq 0. $$ 
For $-1 < q <1$, on the other hand, we can define $D_i$ since the operator $\bigoplus_{n=0}^{\infty} P^{\otimes n}$ is strictly positive (see Section \ref{Preliminaries}) and $\{e_w\}_{w \in[d]^*}$ forms a linear basis of $\mathcal{F}_{\mathrm{alg}}(H) \subset \mathcal{F}_q(H).$
\end{rem}
From these examples, we can guess that the general formula for $D_i e_{j_n\cdots j_1}$ is characterized by partitions of $n+1$ vertices $n>n-1>\cdots>1>i$ and counting their crossings. However, the usual definition of crossings does not work in this setting.
\begin{exam}\label{example}
In the examples above, we pick in $D_i e_{j_5j_4j_3j_2j_1}$ the term
$$q^4 \delta_{i j_3}\delta_{j_5 j_1} \delta_{j_4 j_2}.$$
This corresponds to the partition $\{(i,3),(1,5),(2,4)\}$.
Since $(1,5)$ has no crossing with $(2,4)$, the number of crossings is $2$, while the coefficient above is $q^4$.

This phenomenon also happens for the other terms
$$q^6\delta_{i j_3}\delta_{j_6 j_1} \delta_{j_5 j_2} e_{j_4}, \ q^5 \delta_{i j_3}\delta_{j_6 j_1} \delta_{j_4 j_2} e_{j_5}, \ q^4 \delta_{i j_3}\delta_{j_5 j_1} \delta_{j_4 j_2}e_{j_6}$$
in $D_i e_{j_6j_5j_4j_3j_2j_1}$.
\end{exam}

We need to change the rules of counting crossings for the precise formula of $D_i e_w$.
Here we list the rules of drawing partitions that are compatible with the formula.
We remark that a similar method of counting crossings appears in Definition 3.13 in \cite{Avsec11}.  
\begin{enumerate}
\item Consider $n+1$ vertices $n>n-1>\cdots>1>0 $.\\
\item $0$ must be connected to some $k \in \{1,\ldots,n\}$ with height $1$. \\
\item $l \in \{1,\ldots,k-1\}$ must be coupled with one of $\{k+1,\ldots,n\}$ with height $l+1$. \\
\item Vertices which are not coupled with $\{1,\ldots,k-1\}$ should be singletons and are drawn with straight lines to the top. 
\end{enumerate}
We define $B(n+1)$ as a set of partitions that satisfy the above rules.  
For $\pi \in B(n+1)$,  we denote by $p(\pi)$ the set of parings in $\pi$ and by $s(\pi)$ the singletons in $\pi$ 

\begin{exam}

Let us see what happens with the number of crossings if we follow the drawing rules from above. The term $q^4 \delta_{i j_3}\delta_{j_5 j_1} \delta_{j_4 j_2}$ from Example \ref{example} is now represented by the following crossing partition:
\begin{center}
    \crossing{5}{3}{0/3,1/5,2/4}{}
\end{center}
Note that the number of crossings in the picture above is now indeed $4$, corresponding to the factor $q^4$. Similarly, the factors in the contributions $q^6\delta_{i j_3}\delta_{j_6 j_1} \delta_{j_5 j_2} e_{j_4}$, $q^5 \delta_{i j_3}\delta_{j_6 j_1} \delta_{j_4 j_2} e_{j_5}$ and $q^4 \delta_{i j_3}\delta_{j_5 j_1} \delta_{j_4 j_2}e_{j_6}$ are accounted for correctly by the following partitions:
 
 \begin{center}
    \crossing{6}{3}{0/3,1/6,2/5}{4} \\
     $q^6\delta_{i j_3}\delta_{j_6 j_1} \delta_{j_5 j_2} e_{j_4}$
\end{center}
 
\begin{center}
    \crossing{6}{3}{0/3,1/6,2/4}{5} \\
    $q^5 \delta_{i j_3}\delta_{j_6 j_1} \delta_{j_4 j_2} e_{j_5}$
\end{center}
 
\begin{center}
    \crossing{6}{3}{0/3,1/5,2/4}{6}\\
    $q^4 \delta_{i j_3}\delta_{j_5 j_1} \delta_{j_4 j_2}e_{j_6}$
\end{center}
\end{exam}
We identify $0$ with the index of a dual system and $k \in \{1,\ldots,n\}$ with a letter $j_k \in [d]$ for a given word $j_n j_{n-1}\cdots j_1$. Then our examples from above motivate the following formula for $D_i$.

\begin{prop}\label{formula}
For $i, j_1,\ldots,j_n \in [d]$, we define densely defined unbounded operators $D_1,\ldots,D_d$, whose domains are the algebraic Fock space $\mathcal{F}(H)_{\mathrm{alg}}$, by linear extension of
$$ D_ie_0=0,\qquad D_i e_{j_n\cdots j_1} = \sum_{\pi \in B(n+1)} (-1)^{\pi(0)-1}q^{\mathrm{cross}(\pi)} \delta_{p(\pi)} e_{s(\pi)}$$
where $\mathrm{cross}(\pi)$ is the number of crossings of $\pi$ according to our drawing rules and where $\delta_{p(\pi)} = \prod_{(k,l) \in \pi} \delta_{j_k j_l}$ with $j_0=i$ and $e_{s(\pi)} = e_{j_{k_s}\cdots j_{k_1}}$ for $s(\pi) = \{k_s > \cdots > k_1\}$. 

Then we have 
$$[D_i,A_j] = \delta_{i j} P^{(0)}$$
on the domain $\mathcal{F}_{\mathrm{alg}}(H)$.
\end{prop}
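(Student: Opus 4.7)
The plan is to prove the formula by induction on $n = |w|$, using that $D_i$ is uniquely determined by $D_i e_0 = 0$ together with the commutation relation, via the recursion
\[
D_i e_{jw} = A_j D_i e_w + \delta_{ij} P^{(0)} e_w - D_i l_j e_w.
\]
It therefore suffices to verify that the combinatorial formula satisfies this recursion. The base case $n = 0$ is immediate from the unique partition $\{(0,1)\} \in B(2)$, which yields $D_i e_j = \delta_{ij} e_0$.

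For the inductive step, fix $n \geq 1$ and split the sum over $\pi \in B(n+2)$ that defines $D_i e_{j_{n+1} w}$ according to the role of the new vertex $v := n+1$, which sits leftmost in the diagram and corresponds to the prepended letter $j_{n+1}$. When $v$ is a singleton, deleting $v$ yields a bijection onto $B(n+1)$ that preserves crossings (its vertical line is leftmost, hence crosses no arc), pairings, and sign; the letter $j_{n+1}$ is prepended to each $e_{s(\sigma)}$, so this case contributes precisely $l_{j_{n+1}}^* D_i e_w$, matching the creation part of $A_{j_{n+1}} D_i e_w$. When instead $v$ is paired with some $l \in \{1,\ldots,k-1\}$ (where $0 \sim_\pi k$ with necessarily $k \leq n$), removing the pair $(l,v)$ and relabeling the remaining vertices produces some $\tilde\pi' \in B(n)$, and a direct count under the drawing rules shows that the inserted arc $(l,v)$ at height $l+1$ contributes exactly $n-l$ additional crossings to $\pi$: namely, one with the arc $(0,k)$, one per arc of $\{1,\ldots,k-1\}\setminus\{l\}$ at lower height, two per such arc at higher height, and one per singleton in $\{k+1,\ldots,n\}$; summing these gives $n - l$. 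The resulting weight $q^{n-l + \mathrm{cross}(\tilde\pi')}$ and Kronecker factor $\delta_{j_{n+1} j_l}$ then match the terms carried by the annihilation $l_{j_{n+1}}$ acting on the $l$-th letter of $w$ inside $-D_i l_{j_{n+1}} e_w$.

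The main obstacle will be matching the summation ranges: Case B contributions only arise when $\tilde\pi'(0) = k - 1 \geq l$, whereas $-D_i l_{j_{n+1}} e_w$ sums over all $l \in \{1,\ldots,n\}$ and $\tilde\pi' \in B(n)$ unrestrictedly. My plan is to show that the leftover terms (those with $\tilde\pi'(0) < l$ together with the boundary $l = n$ contribution) are exactly matched by the remaining annihilation $l_{j_{n+1}} D_i e_w$ inside $A_{j_{n+1}} D_i e_w$, via a separate bijection in which each leftover corresponds to a partition $\sigma \in B(n+1)$ with a singleton at position $l$ that is annihilated by $l_{j_{n+1}}$. Checking that the resulting $q$-weight (a power of $q$ counting singletons of $\sigma$ lying above $l$) agrees with the weight produced by the crossing-count identity above is a finite, routine but careful combinatorial verification, after which the induction closes.
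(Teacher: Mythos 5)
Your proposal is correct and is essentially the paper's own argument: the recursion you verify, $D_ie_{jw}=A_jD_ie_w+\delta_{ij}P^{(0)}e_w-D_il_je_w$, is just the commutator relation rewritten via $A_je_w=e_{jw}+l_je_w$, and your three-way matching (singleton $n+1$ $\leftrightarrow$ the creation part of $A_jD_ie_w$; pairs $(n+1,l)$ with $l\le\pi(0)$ $\leftrightarrow$ the corresponding portion of $-D_il_je_w$, with the crossing count $n-l$ that you compute correctly; leftover terms with $l>\pi(0)$ $\leftrightarrow$ the annihilation part via deletion of singletons) is exactly the decomposition used in the paper. The one identity you defer as routine, namely $\mathrm{cross}(\sigma)+|s(\sigma)|-k=\mathrm{cross}(\pi)+n-l$ for the singleton deletion, is indeed checked in the paper in a single line by observing that the crossings on the singleton's line are counted by the non-singleton vertices to its left.
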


\begin{proof}
We have to show that
$$[D_i,A_j]e_0 = \delta_{i j} P^{(0)}e_0=\delta_{i j}e_0
$$
and 
$$[D_i,A_j]e_{j_n\cdots j_1} = \delta_{i j} P^{(0)}e_{j_n\cdots j_1}=0$$
for all $n>0$ and $j_1,\dots,j_n\in[d]$. The first formula is easy to check, so let us concentrate on the second one. We will there rename $j$ to $j_{n+1}$ and for better legibility we will also write sometimes $[j_n\cdots j_1]$ for $e_{j_n\cdots j_1}$.
%{\color{red}{Check whether we want to use this []notation}}

Then we can compute on one hand
\begin{eqnarray*}
& \ & A_{j_{n+1}}D_i e_{j_n\cdots j_1} \\
&=&
\sum_{\sigma \in B(n+1)} (-1)^{\sigma(0)-1}q^{\mathrm{cross}(\sigma)} \delta_{p(\sigma)} A_{j_{n+1}}e_{s(\sigma)} \\
&=& \sum_{\sigma \in B(n+1)} (-1)^{\sigma(0)-1}q^{\mathrm{cross}(\sigma)} \delta_{p(\sigma)} e_{j_{n+1}s(\sigma)}\\
&+&  \sum_{\sigma \in B(n+1)}\sum_{k=1}^{|s(\sigma)|} (-1)^{\sigma(0)-1}q^{\mathrm{cross}(\sigma)+|s(\sigma)|-k} \delta_{j_{n+1} j_{s(\sigma)_k}} \delta_{p(\sigma)} [{s(\sigma)_{|s(\sigma)|}\cdots \check{s(\sigma)}_k \cdots s(\sigma)_{1}}].  
\end{eqnarray*}

On the other hand, we have

\begin{eqnarray*}
D_i A_{j_{n+1}}e_{j_n\cdots j_1} &=& D_i e_{j_{n+1}j_n\cdots j_1} + \sum_{l=1}^n \delta_{j_{n+1} j_{l}} q^{n-l} D_i [j_n \cdots \check{j}_{l} \cdots j_1] \\
&=& D_i e_{j_{n+1}j_n\cdots j_1} \\
&+&  \sum_{l=1}^n\sum_{\pi \in B(n)}(-1)^{\pi(0)-1}  q^{\mathrm{cross}(\pi)+n-l}\delta_{j_{n+1} j_{l}}\delta_{p(\pi)}e_{s(\pi)}, 
\end{eqnarray*}
where $\check{j}_{l}$ means to omit $j_{l}$. Note that all partitions $\pi \in B(n)$ act on $n-1$ letters $j_n,\ldots,\check{j}_{l},\ldots,j_1$ in the sum above.

Let us first see that all terms in the last sum of $A_{j_{n+1}}D_i e_{j_n\cdots j_1}$ show also up as terms in $D_i A_{j_{n+1}}e_{j_n\cdots j_1}$. 

To see this, let us consider the contribution corresponding to $\sigma \in B(n+1)$ and $k \in \{1,\ldots,|s(\sigma)|\}$. Since $s(\sigma)_k$ is a singleton, we can remove it and obtain a partition $\pi \in B(n)$.
%{\color{red} Why don't we use $\pi$ instead of $\tilde \sigma$?}
We also take $l$ so that $l = s(\sigma)_k$. Then for these $\sigma,\pi,k,l$ we have
$$ \delta_{j_{n+1}j_{l}}\delta_{p(\pi)} e_{s(\pi)}
= \delta_{j_{n+1} j_{s(\sigma)_k}}\delta_{p(\sigma)} [s(\sigma)_{|s(\sigma)|}\cdots \check{s(\sigma)_{k}} \cdots s(\sigma)_{1}]$$ 
where $\pi$ acts on $j_n,\ldots,\check{j}_{l},\ldots,j_1$. 

For example, if we take $\sigma \in B(8)$ represented by
\begin{center}
    \crossing{7}{3}{0/3,1/7,2/5}{4,6}
\end{center}
 and $k = 1$, then we take $l=4$ and $\pi \in B(7)$ represented by
 \begin{center}
 \begin{tikzpicture}
 \foreach \x in {6,5,4}
\coordinate (\x) at (6-\x,0) node [below] at (\x) {\x};
\foreach \x in {3,2,1,0}
\coordinate (\x) at (7-\x,0) node [below] at (\x) {\x};
\foreach \x/\y in {1/6,2/4}
{\draw (\x)--(7-\x,\x+1);
\draw (\y)--(6-\y, \x+1);
\draw (7-\x,\x+1)--(6-\y, \x+1);}
\draw (5)--(1, 4);
\draw (3)--(4,1);
\draw (0)--(7,1);
\draw (4,1)--(7,1);
\begin{scope}[dashed]
\draw (3,0)--(3,4);
\end{scope}[dashed]
\end{tikzpicture}
 \end{center}
where we ignore the dashed line.

Note that we have also $\pi(0) = \sigma(0)$ since we removed $s(\sigma)_k$ which is on the left of $\sigma(0)$. 
By definition, the difference between $\sigma$ and $\pi$ is only the singleton $s(\sigma)_k$ and the difference between $\mathrm{cross}(\sigma)$ and $\mathrm{cross}(\pi)$ is the number of crossing points on the line of $s(\sigma)_k$. Recall that for $\pi \in B(n+1)$ the vertices that are not singletons must be coupled. Therefore the number of crossing points on the line of $s(\sigma)_k$ is equal to the number of vertices which are on the left of $s(\sigma)_k$ and not a singleton, which is equal to $(n-l)-(|s(\sigma)|-k)$; thus we have
$$ \mathrm{cross}(\pi)+n-l=\mathrm{cross}(\sigma)+|s(\sigma)|-k.$$ 
This implies that the contribution corresponding to $\sigma$ and $k$ in the second sum of $A_{j_{n+1}}D_i e_{j_n\cdots j_1}$ shows also up as a contribution corresponding to $\pi$ and $l$ in 
$D_i A_{j_{n+1}}e_{j_n\cdots j_1}$

Next, we need to identify the remaining terms of the last sum.
Note that the $(\pi,l)$ which we can get under the above identification from $(\sigma,k)$ 
%can be written by $ \delta_{j_{n+1}j_{s(\sigma)_k}}\delta_{p(\sigma)} e_{s(\sigma)_{|s(\sigma)|}\cdots \check{s(\sigma)_{k}} \cdots s(\sigma)_{1}}$
can be identified with partitions $\pi'$ of $n+2$ vertices $n+1>n>\cdots>1>0$ such that $n+1$ is coupled with some $n+1> k > \pi'(0)$ and $\pi' \setminus (n+1,k)$ belongs to $B(n)$ in an order preserving way. Thus the $l \in \{1,\ldots,n\}$ and $\pi \in B(n)$ such that $l >\pi(0)$ are exactly those terms corresponding to all possible $(\sigma,k)$. 
%in the sum
%$$ 
%\sum_{l=1}^n\sum_{\pi \in B(n)}(-1)^{\pi(0)-1}  q^{\mathrm{cross}(\pi)+n-l}\delta_{j_{n+1} j_{l}}\delta_{p(\pi)}e_{s(\pi)},$$

%then the term $(-1)^{\pi(0)-1}q^{\mathrm{cross}(\pi)+n-l}\delta_{j_{n+1} j_{l}}\delta_{p(\pi)}e_{s(\pi)}$ should be canceled by one in (by taking $\sigma \in B(n+1)$ such that $\sigma(l)$ is a singleton and $\sigma \setminus \{\sigma(l)\}$ is equal to $\pi$), 
%$$ \sum_{\sigma \in B(n+1)}\sum_{k=1}^{|s(\sigma)|} (-1)^{\sigma(0)-1}q^{\mathrm{cross}(\sigma)+|s(\sigma)|-k} \delta_{j_{n+1} j_{s(\sigma)_k}} \delta_{p(\sigma)} e_{s(\sigma)_{|s(\sigma)|}\cdots \check{s(\sigma)_{k}} \cdots s(\sigma)_{1}}.$$
So we have
\begin{multline*}
-[D_i,A_{j_{n+1}}]e_{j_n\cdots j_1}
 = \sum_{\sigma \in B(n+1)} (-1)^{\sigma(0)-1}q^{\mathrm{cross}(\sigma)} \delta_{p(\sigma)} e_{j_{n+1}s(\sigma)}- D_i e_{j_{n+1}j_n\cdots j_1}\\
 + \sum_{l=1}^n\sum_{\substack{\pi \in B(n)\\ \pi(0)\ge l}}(-1)^{\pi(0)}  q^{\mathrm{cross}(\pi)+n-l}\delta_{j_{n+1} j_{l}}\delta_{p(\pi)}e_{s(\pi)}.
\end{multline*}
In order to see that this is actually equal to zero,
we need now to understand the condition $l \le \pi(0)$ in the last sum.

In this case, we can associate the term $\delta_{j_{n+1} j_{l}}\delta_{p(\pi)}e_{s(\pi)}$ to a partition $\pi'$ in $B(n+2)$; $\pi'$ is given by coupling $n+1$ with $l \le \pi(0)$ and requiring that $\pi' \setminus (n+1,l)$ is equal to $\pi$. We have then 
$$\pi(0)+1 = \pi'(0),$$ 
since $l \le \pi(0)$ is inserted into $\pi'$. 
 Moreover the difference between $\mathrm{cross}(\pi')$ and $\mathrm{cross}(\pi)$ is the number of crossing points on the pair $(n+1,l)$. Note that in our definitions, crossing points on $(n+1,l)$ consist of the coupling with $\pi'(0)>j>l$ (double count) and the coupling with $l>j\ge 0$ (single count) and also singletons of $\pi$ (single count). Therefore, this difference is equal to $n-l$ and we have $$\mathrm{cross}(\pi') = \mathrm{cross}(\pi) + n-l.$$

For example, take $\pi \in B(8)$ represented by
\begin{center}
    \crossing{7}{3}{0/3,1/7,2/5}{4,6}
\end{center}
and take $l= 2$. Then $\pi'\in B(10)$ is represented by
\begin{center}
 \begin{tikzpicture}
  \foreach \x in {9,8,7,6,5,4,3,2,1,0}
\coordinate (\x) at (9-\x,0) node [below] at (\x) {\x};
\foreach \x/\y in {0/4,1/8,3/6}
{\draw (\x)--(9-\x,\x+1);
\draw (\y)--(9-\y, \x+1);
\draw (9-\x,\x+1)--(9-\y, \x+1);}
\begin{scope}[dashed]
\draw (2)--(7,3);
\draw (9)--(0,3);
\draw (7,3)--(0,3);
\end{scope}[dashed]
\draw (5)--(4,5);
\draw (7)--(2,5);
 \end{tikzpicture}
 \end{center}
Here we have
$$\pi'(0)=4,\quad \pi(0)=3:\qquad 3+1=4$$
and
$$
\mathrm{cross}(\pi')=13,\quad
\mathrm{cross}(\pi)=7\quad n=8,\quad l=2: \qquad13=7+(8-2).
$$

By combining these results we obtain
\begin{align*}
-[D_i,A_{j_{n+1}}&]e_{j_n\cdots j_1}
+  D_i e_{j_{n+1}j_n\cdots j_1}\\
 &=\sum_{\sigma \in B(n+1)} (-1)^{\sigma(0)-1}q^{\mathrm{cross}(\sigma)} \delta_{p(\sigma)} e_{j_{n+1}s(\sigma)}\\
&\qquad\qquad\qquad  + \sum_{l=1}^n\sum_{\substack{\pi \in B(n)\\ \pi(0)\ge l}}(-1)^{\pi(0)}  q^{\mathrm{cross}(\pi)+n-l}\delta_{j_{n+1} j_{l}}\delta_{p(\pi)}e_{s(\pi)} \\
 &= \sum_{\sigma \in B(n+1)}(-1)^{\sigma(0)-1}q^{\mathrm{cross}(\sigma)} \delta_{p(\sigma)} e_{j_{n+1}s(\sigma)}\\
 &\qquad\qquad\qquad
  + \sum_{\substack{\pi' \in B(n+2)\\ \pi'(n+1) \mathrm{\ is \ not \ a \ singleton}}} (-1)^{\pi'(0)-1}  q^{\mathrm{cross}(\pi')}\delta_{p(\pi')}e_{s(\pi')} \\
 &=\sum_{\pi' \in B(n+2)} (-1)^{\pi'(0)-1}  q^{\mathrm{cross}(\pi')}\delta_{p(\pi')}e_{s(\pi')}\\
 &= D_i e_{j_{n+1}j_n\cdots j_1},
\end{align*}
and thus $[D_i,A_{j_{n+1}}]e_{j_n\cdots j_1}=0$, which proves our assertion.
\end{proof}
%Since $\pi \in B(n+1)$ must have a singleton if $n$ is even, $\langle D_i e_w, e_0\rangle =0$ when $|w|$ is even. Note that if $n=2m-1$ and $s(\pi)=0$ for $\pi \in B(2m)$, then $\pi(0) = m$ and in this case we have,
%$$\langle D_i e_{j_{2m-1}\cdots j_1}, e_0\rangle_q = \sum_{\substack{\pi \in B(2m),\\ \pi(0)=m}} (-1)^{m-1}q^{\mathrm{cross}(\pi)} \delta_{p(\pi)}.$$
In the following we want to use this proposition to conclude that $e_0$ lies in the domain of $D_i^*$ and actually also derive a formula for $D_i^*e_0$.

\begin{thm}\label{conjugatesystem}
For any $-1<q<1$, there exists a normalized dual system and thus a conjugate system for $q$-Gaussians $A = (A_1,\ldots,A_d)$. 
\end{thm}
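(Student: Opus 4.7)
The operators $D_1, \ldots, D_d$ furnished by Proposition \ref{formula} already satisfy three of the four conditions for a normalized dual system: their domain contains $\mathcal{F}_{\mathrm{alg}}(H)$, they vanish on $e_0$, and they obey $[D_i, A_j] = \delta_{ij} P^{(0)}$. The only outstanding condition is $e_0 \in \mathrm{dom}(D_i^*)$. Once this is verified, Theorem \ref{shlyakhtenko} immediately produces the conjugate system $\xi_i = D_i^* e_0$, so the whole result reduces to a single boundedness estimate.

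To verify $e_0 \in \mathrm{dom}(D_i^*)$, the plan is to identify the putative vector $\xi_i$ level by level in the orthogonal decomposition $\mathcal{F}_q(H) = \bigoplus_{n \ge 0} H^{\otimes n}$, and then show $\sum_n \|\xi_i^{(n)}\|_q^2 < \infty$. From Proposition \ref{formula}, the vacuum coefficient $\langle e_0, D_i e_w\rangle_q$ vanishes unless $|w|$ is odd; for $|w| = 2m-1$ the only contributing partitions are those $\pi \in B(2m)$ with $s(\pi) = \emptyset$, which by the drawing rules must pair $0$ with the central vertex $m$ and realise a bijection $\{1,\ldots,m-1\} \to \{m+1,\ldots,2m-1\}$ with the remaining pairs. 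This pins down $\xi_i^{(2m-1)} \in H^{\otimes(2m-1)}$ as an explicit weighted sum indexed by such through-matchings, with the letter $i$ appearing at the central position.

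The key observation, paralleling Corollary \ref{cor:xi} in the one-variable case, is that every contributing $\pi$ carries at least $m(m-1)/2$ crossings under the drawing rules: pairing $0$ with the central vertex $m$ forces its line to cross each of the $m-1$ through-pairs, and any bijection between the two halves contributes at least $\binom{m-1}{2}$ further internal crossings. The remaining ingredients in $\|\xi_i^{(2m-1)}\|_q^2$, namely the $q$-factorial normalisations within the $(2m-1)$-particle space, the number of admissible through-matchings, and the constants from Lemma \ref{FreeRightAnnihilation} and Bożejko's $q$-Haagerup inequality, grow only polynomially or with linear exponents in $m$. A bound of the form $\|\xi_i^{(2m-1)}\|_q^2 \le C\, |q|^{m(m-1)}\, \mathrm{poly}(m)$ therefore follows, and summing over $m$ gives $\|\xi_i\|_q^2 < \infty$ for every $q \in (-1,1)$ since the quadratic exponent $m(m-1)$ beats any polynomial factor.

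The hardest part is the combinatorial bookkeeping: counting crossings cleanly under the non-standard drawing rules of $B(n+1)$, and handling the non-orthogonality of the family $\{e_w : |w| = 2m-1\}$ when translating vacuum coefficients into a genuine $q$-norm estimate. The quadratic exponent $q^{m(m-1)/2}$ eventually dominates all linear factors $q^{am}$ arising in the auxiliary estimates, and this is precisely the improvement over the earlier $q^m$-type bounds of Dabrowski which restricted the argument to a neighbourhood of $q = 0$.
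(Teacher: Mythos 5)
Your proposal follows essentially the same route as the paper: reduce everything to $e_0\in\mathrm{dom}(D_i^*)$ via Theorem \ref{shlyakhtenko}, observe that only the singleton-free partitions in $B(2m)$ pairing $0$ with the central vertex contribute to the vacuum coefficient, extract the quadratic factor $q^{m(m-1)/2}$ from the crossing count (your decomposition $(m-1)+\binom{m-1}{2}$ agrees with the paper's $\sum_{k=1}^{m-1}k$), and control the remaining linear-exponent factors with Lemma \ref{FreeRightAnnihilation}. The only cosmetic difference is that you bound $\sum_m\|\xi_i^{(2m-1)}\|_q^2$ level by level while the paper bounds the functional $\langle D_i\,\cdot\,,e_0\rangle_q$ directly, and the paper settles your ``non-orthogonality'' worry by recognizing the sum over through-matchings weighted by $q^{|\pi_p|}$ as exactly the matrix of $P^{(m-1)}$, i.e.\ the $q$-inner product, before applying Cauchy--Schwarz together with the norm bound on the free right annihilation operators.
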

\begin{proof}
By Theorem \ref{shlyakhtenko}, it suffices to see that $D_i^* e_0$ exists in $\mathcal{F}_q(H)$. 
In order to see that $e_0$ is in the domain of $D_i^*$, we have to show that the linear functional $\langle D_i \ \cdot \ , e_0 \rangle_q$ is bounded on the algebraic Fock space.

Let us take $\sum_{w \in [d]^*} \alpha_w e_w \in \mathcal{F}_{\mathrm{alg}}(H)$. Note that we can compute $\langle D_i e_w , e_0 \rangle_q$ by counting summands without singletons by Proposition \ref{formula} and in that case the word length $|w|$ must be odd and $\pi \in B(2m)$ must connect $0$ to $m$ for $|w|=2m-1$. Thus we have
\begin{eqnarray*}
\langle D_i \sum_{w \in [d]^*} \alpha_w e_w , e_0 \rangle_q &=& \sum_{m=1}^{\infty} \sum_{|w|=2m-1} \sum_{\substack{\pi \in B(2m) \\ \pi(0)=m}}\alpha_w (-1)^{m-1} q^{\mathrm{cross}(\pi)} \delta_{p(\pi)w} \\
&=& \sum_{m=1}^{\infty} \sum_{\substack{\pi \in B(2m) \\ \pi(0)=m}} \sum_{|w|=2m-1} \alpha_w (-1)^{m-1} q^{\mathrm{cross}(\pi)} \delta_{p(\pi)w}.
\end{eqnarray*}
Note that we write now $\delta_{p(\pi)w}$ for
$\delta_{p(\pi)} = \prod_{(k,l) \in \pi} \delta_{j_k j_l}$ in order to make the dependency on $w={j_{2m-1}\cdots j_1}$ explicit. 
For each $\pi \in B(2m)$, let us consider words $w$ with $|w| = 2m-1$ such that $\delta_{p(\pi)w} = 1$. Such words can be represented by $\pi_p(w) i  w$ where $w$ is any word with $|w| = m-1$ and $\pi_p \in S_{m-1}$ is a permutation such that $\delta_{p(\pi)\pi_p(w) i  w} =1$. Note that there is a one to one correspondence between $\pi$ and $\pi_p$. For example, the partition $\pi$ in $B(8)$ represented by
\begin{center}
    \crossing{7}{4}{0/4,1/6,2/5,3/7}{}
\end{center}
induces the element $\pi_p$ in $S_3$ represented by 
\begin{center}

  \begin{tikzpicture}
  \foreach \x in {3,2,1}
  \coordinate (\x) at (3-\x,1) node [above] at (\x) {\x};
  \foreach \x in {7,6,5}
  \coordinate (\x) at (7-\x,0) node [below] at (\x) {\x};
  \draw (3)--(7);
  \draw (2)--(5);
  \draw (1)--(6);
  \end{tikzpicture}
  
\end{center}
The important observation is that we have 
$$\mathrm{cross}(\pi) = \frac{m(m-1)}{2} + |\pi_p| $$
where $|\pi_p|$ is the number of inversions of $\pi_p$. 
Actually, when we take, for $1 \le k \le m-1$, the pair $(k,\pi(k)) \in \pi$ , then this pair crosses with $k$ pairs $(0,m)$, $(1,\pi(1))$, $\ldots$, $(k-1,\pi(k-1))$ on the right area $(m> m-1 > \cdots > 0)$, which implies the number of crossings in the right area is $\sum_{k=1}^{m-1} k = \frac{m (m-1)}{2}$. In addition, $(k,\pi(k)),(l,\pi(l))$ ($k < l \in \{1,\ldots,m-1\})$ are crossing in the left area ($2m-1> 2m-2 > \cdots > m$) if and only if $\pi(l) < \pi(k)$, which implies the number of crossings in the left area is $|\pi_p|$.     

Thus we can continue our calculation as follows:
\begin{align*}
\langle D_i \sum_{w \in [d]^*} \alpha_w e_w , e_0 \rangle_q 
&= \sum_{m=1}^{\infty} \sum_{\substack{\pi \in B(2m) \\ \pi(0)=m}} \sum_{|w|=2m-1} \alpha_w (-1)^{m-1} q^{\mathrm{cross}(\pi)} \delta_{p(\pi)w} 
\\ &= \sum_{m=1}^{\infty} (-1)^{m-1}q^{\frac{m(m-1)}{2}}\sum_{\pi_p \in S_{m-1}} \sum_{|w|=m-1} \alpha_{\pi_p(w)iw}  q^{|\pi_p|} \\ 
&= \sum_{m=1}^{\infty} (-1)^{m-1}q^{\frac{m(m-1)}{2}}\sum_{|w|=m-1}\sum_{\pi_p \in S_{m-1}} \sum_{|v| = m-1}\delta_{v \pi_p(w)} \alpha_{viw} q^{|\pi_p|}\\
&= \sum_{m=1}^{\infty} (-1)^{m-1}q^{\frac{m(m-1)}{2}}\sum_{|w|=m-1}\sum_{|v| = m-1}\sum_{\pi_p \in S_{m-1}}  \langle  e_v,e_{\pi_p(w)}\rangle \alpha_{viw} q^{|\pi_p|}\\
&= \sum_{m=1}^{\infty} (-1)^{m-1}q^{\frac{m(m-1)}{2}}\sum_{|w|=m-1}\langle \sum_{|v| = m-1} \alpha_{viw}e_v ,e_w \rangle_q.
\end{align*}
By the triangle inequality, we have
\begin{eqnarray*}
|\langle D_i \sum_{w \in [d]^*} \alpha_w e_w , e_0 \rangle_q|
& \le & \sum_{m=1}^{\infty} |q|^{\frac{m(m-1)}{2}}\sum_{|w|=m-1}|\langle \sum_{|v| = m-1} \alpha_{viw}e_v ,e_w \rangle_q|\\
& \le & \sum_{m=1}^{\infty} |q|^{\frac{m(m-1)}{2}}\sum_{|w|=m-1}\| \sum_{|v| = m-1} \alpha_{viw}e_v\|_q\cdot \|e_w \|_q.
\end{eqnarray*}
Note that $\|e_w \|^2_q \le \sum_{\pi \in S_{m-1}} |q|^{|\pi|} = [m-1]_{|q|} !$.

On the other hand, we can write 
$$\sum_{|v| = m-1} \alpha_{viw}e_v=r_{iw}\sum_{|v| = m-1} \alpha_{viw} e_{viw}
=r_{iw}\sum_{|v| = 2m-1} \alpha_{v} e_{v}
$$ 
where 
$r_{w} = r_{w_1} \cdots r_{w_n}$ for $w = w_1 \cdots w_n$
is the free right annihilation operator of the word $w$.

By Lemma \ref{FreeRightAnnihilation}, the free right annihilation operators $r_1,\ldots,r_d$ are bounded and their operator norms are less than $C= \sqrt{w(q)}^{-1}$ where $w(q)$ is a positive constant which appears in \cite{Bozejko98}. Since $r_{iw}$ is in our case a product of $m$ such free right annihilation operators, we have $\Vert r_{iw}\Vert\leq C^m$ and thus
$$\| \sum_{|v| = m-1} \alpha_{viw}e_v\|_q \le C^m \|\sum_{|v| = 2m-1} \alpha_{v}e_v \|_q \le C^m \|\sum_{w \in [d]^*} \alpha_w e_w\|_q.$$

So, finally, we have the following estimate:
\begin{eqnarray*}
|\langle D_i \sum_{w \in [d]^*} \alpha_w e_w , e_0 \rangle_q|
&\le& \|\sum_{w \in [d]^*} \alpha_w e_w\|_q \sum_{m=1}^{\infty} |q|^{\frac{m(m-1)}{2}}\sum_{|w|=m-1} C^m \sqrt{[m-1]_{|q|}!} \\
&=& \|\sum_{w \in [d]^*} \alpha_w e_w\|_q \sum_{m=1}^{\infty} |q|^{\frac{m(m-1)}{2}} d^{m-1} C^m \sqrt{[m-1]_{|q|}!}
\end{eqnarray*}
and, by the ratio test, we can check that $$\sum_{m=1}^{\infty} |q|^{\frac{m(m-1)}{2}} d^{m-1} C^m \sqrt{[m-1]_{|q|}!} < \infty.$$ 
This implies that the linear functional $\langle D_i \ \cdot \ , e_0 \rangle_q$ is bounded and therefore $e_0 \in \mathrm{dom}(D_i^*)$.
\end{proof}

\begin{cor}\label{ConcreteFormula}
Let $(D_1,\ldots,D_d)$ be the normalized dual system of the $q$-Gaussian operators, as defined in Proposition \ref{formula}. Then the corresponding conjugate system $(\xi_1,\dots,\xi_d)$ is given by 
$$\xi_i=D_i^* e_0 = \sum_{w \in [d]^*} (-1)^{|w|} q^{\frac{(|w|+1)|w|}{2}} r_{i w}^* e_w $$
where $r_{i w} = r_i r_{w_1} \cdots r_{w_n}$ for $w = w_1 \cdots w_n$.
Moreover, the series for $\xi_i$ is not only convergent with respect to the Hilbert space norm $\Vert\cdot\Vert_q$, but also with respect to the operator norm $\Vert\cdot\Vert$, if we identify  operators in $W^*(A)$ with elements in the Fock space. Thus $\xi_i=X_i e_0$, where $X_i$ is contained in the norm closure of non-commutative polynomials  $\mathbb{C}\langle A \rangle$; i.e., in particular $X_i\in W^*(A)$.
\end{cor}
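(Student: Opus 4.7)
The plan is to first extract the explicit formula for $\xi_i = D_i^* e_0$ directly from the final identity established in the proof of Theorem \ref{conjugatesystem}, and then separately to establish convergence of the resulting series in the operator norm.

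For the formula, I would take $\eta = \sum_{w \in [d]^*} \alpha_w e_w \in \mathcal{F}_{\mathrm{alg}}(H)$ and rewrite the last displayed equation in the proof of Theorem \ref{conjugatesystem} in the form
$$\langle D_i \eta, e_0 \rangle_q = \sum_{m=1}^{\infty} (-1)^{m-1} q^{m(m-1)/2} \sum_{|w|=m-1} \langle r_{iw}\eta, e_w \rangle_q,$$
using the identity $r_{iw}\eta = \sum_{|v|=m-1}\alpha_{viw}e_v$ already noted in that proof. Moving $r_{iw}$ to the right via its $q$-adjoint and using that the scalar coefficients are real, I can read off
$$D_i^* e_0 = \sum_{m=1}^{\infty} (-1)^{m-1} q^{m(m-1)/2} \sum_{|w|=m-1} r_{iw}^* e_w,$$
which, reindexing with $m = |w|+1$, gives the stated formula. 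Existence of $D_i^* e_0$ as an element of $\mathcal{F}_q(H)$ is already part of Theorem \ref{conjugatesystem}, so this step is purely identification of a known element.

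For the operator norm statement, I would observe that $r_{iw}^* e_w$ lies in the $(2m+1)$-particle subspace when $|w| = m$, so Bożejko's $q$-Haagerup inequality recalled in the Preliminaries yields $\|r_{iw}^* e_w\| \le (2m+2) C_{|q|}^{3/2} \|r_{iw}^* e_w\|_q$. Combining this with $\|r_{iw}^*\| = \|r_{iw}\| \le w(q)^{-(m+1)/2}$ from Lemma \ref{FreeRightAnnihilation} and the elementary bound $\|e_w\|_q^2 \le [m]_{|q|}!$, I obtain a termwise estimate in operator norm. Summing over the $d^m$ words of length $m$ gives a majorizing series whose general term is the product of the super-exponentially decaying factor $|q|^{m(m+1)/2}$ with a quantity that grows at most exponentially in $m$, since $[m]_{|q|}! \le (1-|q|)^{-m}$. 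A ratio test then yields absolute convergence of the series for $\xi_i$ in operator norm.

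Finally, under the identification of $\mathcal{F}_{\mathrm{alg}}(H)$ with $\mathbb{C}\langle A \rangle$ via $\eta \mapsto Q[\eta]$, the partial sums of the series correspond to a sequence of noncommutative polynomials in $A_1,\ldots,A_d$ which is Cauchy in operator norm by the estimate above. Its limit $X_i$ therefore lies in the norm closure of $\mathbb{C}\langle A \rangle$, hence in $W^*(A)$, and satisfies $X_i e_0 = \xi_i$ by continuity of the evaluation map $T \mapsto T e_0$ from $B(\mathcal{F}_q(H))$ to $\mathcal{F}_q(H)$. The main point requiring care is the competition between the quadratic exponent $|q|^{m(m+1)/2}$ and the exponentially growing factor $\sqrt{[m]_{|q|}!}\cdot d^m \cdot w(q)^{-(m+1)/2}$; verifying that the quadratic exponent wins for every $q \in (-1,1)$ is precisely the phenomenon flagged in the introduction as the reason our argument works on the entire interval.
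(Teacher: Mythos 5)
Your proposal is correct and follows essentially the same route as the paper: read off the formula for $D_i^*e_0$ from the final identity in the proof of Theorem \ref{conjugatesystem} by passing $r_{iw}$ to its adjoint (the paper additionally spells out the orthogonality between different particle-number subspaces needed to assemble the single inner product $\langle\eta,\xi_i\rangle_q$, which you leave implicit), and then establish operator-norm convergence via Bożejko's Haagerup-type inequality, the bound $\Vert r_{iw}^*\Vert\le \sqrt{w(q)}^{-(m+1)}$ from Lemma \ref{FreeRightAnnihilation}, the estimate $\Vert e_w\Vert_q^2\le[m]_{|q|}!$, and the ratio test. Nothing essential is missing.
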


\begin{proof}

In the proof of Theorem \ref{conjugatesystem}, we have seen that

\begin{eqnarray*}
\langle D_i \sum_{v \in [d]^*} \alpha_v e_v , e_0 \rangle_q &=&\sum_{m=1}^{\infty} (-1)^{m-1}q^{\frac{m(m-1)}{2}}\sum_{|w|=m-1}\langle \sum_{|v| = 2m-1} r_{iw} \alpha_{v} e_{v} ,e_w \rangle_q \\
&=&\sum_{m=1}^{\infty} (-1)^{m-1}q^{\frac{m(m-1)}{2}}\langle \sum_{|v| = 2m-1} \alpha_{v}e_v ,\sum_{|w|=m-1}r_{i w}^*e_w \rangle_q.
\end{eqnarray*}
Note that $$ \langle \sum_{|v| = 2m-1} \alpha_{v}e_v ,\sum_{|w|=m'-1}r_{i w}^*e_w \rangle_q = 0,\qquad \text{if $m \neq m'$}$$ 
since $r_{i w}^*$ maps $e_w$ (with $|w| = m'-1$) to the subspace spanned by $\{e_v\}_{|v|=2m'-1}$. This also implies 
$$\langle \sum_{|v| = 2m} \alpha_{v}e_v ,\sum_{|w|=m'-1}r_{i w}^*e_w \rangle_q = 0,\qquad \text{for any $m,m'$}.$$
Thus we obtain
\begin{multline*}
\sum_{m=1}^{\infty} (-1)^{m-1}q^{\frac{m(m-1)}{2}}\langle \sum_{|v| = 2m-1} \alpha_{v}e_v ,\sum_{|w|=m-1}r_{i w}^*e_w \rangle_q \\ 
=\langle \sum_{v \in [d]^*} \alpha_{v}e_v \ , \sum_{m=1}^{\infty} \sum_{|w|=m-1}(-1)^{m-1}q^{\frac{m(m-1)}{2}}r_{i w}^*e_w \rangle_q. 
\end{multline*}
For the operator norm, we can estimate by the triangle inequality
$$\|D^*_i e_0 \| \le \sum_{m=0}^{\infty} \sum_{|w|=m}|q|^{\frac{m(m+1)}{2}}\|r_{i w}^*e_w\|. $$
Now, we use Bożejko's Haagerup type inequality \cite{Bozejko99}, which tells us for $|w| = m $
$$ \|r_{i w}^*e_w\| \le (2m + 2)C_{|q|}^{\frac{3}{2}} \|r_{i w}^*e_w\|_q, $$
where $C_q^{-1} = \prod_{m=1}^{\infty} (1-q^m).$ Since $\|r_{i w}^*\| \le \sqrt{w(q)}^{-(m+1)}$ and $ \|e_w\|_q \le \sqrt{[m]_{|q|}!}$ (see the proof of Theorem \ref{conjugatesystem}), we have

\begin{eqnarray*}
\sum_{m=0}^{\infty} \sum_{|w|=m}|q|^{\frac{m(m+1)}{2}}\|r_{i w}^*e_w\|
& \le & \sum_{m=0}^{\infty} \sum_{|w|=m}|q|^{\frac{m(m+1)}{2}}(2m + 2)C_{|q|}^{\frac{3}{2}} \sqrt{w(q)}^{-(m+1)} \sqrt{[m]_{|q|}!}\\
& =& \sum_{m=0}^{\infty} d^m |q|^{\frac{m(m+1)}{2}}(2m + 2)C_{|q|}^{\frac{3}{2}} \sqrt{w(q)}^{-(m+1)} \sqrt{[m]_{|q|}!},
\end{eqnarray*}
which is finite by the ratio test. Since $e_w$ can be represented by non-commutative polynomials over $q$-Gaussians, $D^*_i e_0$ belongs to the norm closure of $\mathbb{C}\langle A \rangle$. 
\end{proof}

\begin{rem}
We can recover the results in Section \ref{OneVariable} for the case of one variable from Proposition \ref{formula} and Corollary \ref{ConcreteFormula} by considering $d=1$ and identifying $e_{1^m}$ with $e_m$. In particular, we have then

$$r_1^* e_m = \frac{1}{[m+1]_q} e_{m+1},$$
and
$$(r_1^*)^{m+1} e_m = \frac{1}{[m+1]_q[m+2]_q\cdots [2m+1]_q} e_{2m+1} = \frac{[m]_q!}{[2m+1]_q!}e_{2m+1}$$
and
$$\xi_1= \sum_{m=0}^\infty (-1)^{m} q^{\frac{(m+1)m}{2}} (r_1^*)^{m+1} e_m 
= \sum_{m=0}^\infty (-1)^{m} q^{\frac{(m+1)m}{2}}
\frac{[m]_q!}{[2m+1]_q!}e_{2m+1},
$$
which recovers, by replacing $m$ by $m-1$, the formula for $\xi=D^*e_0$ in Corollary \ref{cor:xi}.
\end{rem}

\section{Lipschitz conjugate}\label{LipschitzConjugate}

Let us check that the conjugate system $(\xi_1,\dots,\xi_d)$ for the $q$-Gaussian variables are Lipschitz conjugate variables, namely, for each $i\in[d]$, $\xi_i=D_i^* e_0 \in \mathrm{dom}(\overline{\partial_j})$ and $\overline{\partial_j} \xi_i \in W^*(A) \overline{\otimes} W^*(A)$ for each $j \in [d]$. For this we need to know $\partial_j e_w$ and this has a similar combinatorial formula to that of a normalized dual system. Again we have to consider a special set of partitions, consisting just of singletons and pairs, and draw them in a specific way to count their crossings.
\begin{enumerate}
    \item Consider $n+1$ vertices $n > \cdots >1>0 . $ \\
    \item The vertex $0$ must be coupled with some $k \in \{1,\ldots,n\}$ with height $1$.\\
    \item Each $l \in \{1,\ldots,k-1\}$ is a singleton or coupled with one of $\{k+1,\ldots,n\}$ with height $l+1. $\\
    \item Vertices which are not coupled with one of $\{1,\ldots,k-1\}$ should be singletons and are drawn with straight lines to the top.
\end{enumerate}
Let $C(n+1)$ be the set of partitions defined by the rules above. For each $\pi \in C(n+1)$, we define $s_l(\pi)$ and $s_r(\pi)$ as the set of singletons in the left area $n \ge k > \pi(0) $ and in the right area $\pi(0) > k \ge 1$, respectively. As before, we use the notation $\mathrm{cross}(\pi)$ for the number of crossings in the drawing according to these rules.   

For each $w \in [d]^*$, we identify $e_w$ with the noncommutative polynomial $Q[w]$ over $q$-Gaussians. We give now the combinatorial formula for $\partial_i e_{j_n \cdots j_1}$ identifying each index $j_k$ with a vertex of $k$ (where we put $j_0=i$). 
\begin{prop}\label{ncderivative}
For each $i\in[d]$, $n\in\mathbb{N}$ and  $j_1,\ldots,j_n \in [d]$, we have
$$\partial_i e_{j_n \cdots j_1} = \sum_{\pi \in C(n+1)} (-1)^{|p(\pi)|-1} q^{\mathrm{cross}(\pi)-|s_r(\pi)|} \delta_{p(\pi)} e_{s_l(\pi)} \otimes e_{s_r(\pi)}$$
As before, we denote by $p(\pi)$ the set of parings in $\pi$ and the factor $\delta_{p(\pi)}$ ensures
that $\pi$ has to pair the same indices.
\end{prop}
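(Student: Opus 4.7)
The plan is to prove the formula by induction on the word length $n$, in parallel with the inductive proof of Proposition \ref{formula}. The base case $n=1$ is immediate: the unique partition $\{(0,1)\}$ in $C(2)$ has $|p|=1$, no crossings, and no singletons, so the right-hand side is $\delta_{i,j_1}\,1\otimes 1$, matching $\partial_i A_{j_1}$.

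For the inductive step I apply the Leibniz rule for $\partial_i$ to the basis recursion
$$e_{j_{n+1}\cdots j_1} = A_{j_{n+1}}e_{j_n\cdots j_1} - \sum_{l=1}^n q^{n-l}\delta_{j_{n+1},j_l}\,e_{j_n\cdots \check{j_l}\cdots j_1},$$
(which comes from $e_{j_{n+1}\cdots j_1} = l_{j_{n+1}}^* e_{j_n\cdots j_1}$ and the explicit action of $l_{j_{n+1}}$ on basis vectors), and use $\partial_i A_{j_{n+1}} = \delta_{i,j_{n+1}}\,1\otimes 1$ to obtain
$$\partial_i e_{j_{n+1}\cdots j_1} = \delta_{i,j_{n+1}}\,1\otimes e_{j_n\cdots j_1} + (A_{j_{n+1}}\otimes 1)\partial_i e_{j_n\cdots j_1} - \sum_{l=1}^n q^{n-l}\delta_{j_{n+1},j_l}\partial_i e_{j_n\cdots \check{j_l}\cdots j_1}.$$
Substituting the inductive hypothesis and expanding $A_{j_{n+1}} = l_{j_{n+1}}^* + l_{j_{n+1}}$ produces four families of terms, which I then match to the sum over $\pi \in C(n+2)$ by classifying according to the role of the leftmost vertex $n+1$.

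The matching splits into three valid cases: (I) $\pi(0) = n+1$ forces vertices $1,\ldots,n$ to be right singletons and yields the unique partition accounting for $\delta_{i,j_{n+1}}\,1\otimes e_{j_n\cdots j_1}$; (II) $\pi(0) < n+1$ and $n+1$ is a left singleton, which bijects with $\sigma \in C(n+1)$ and reproduces the creation part $l_{j_{n+1}}^*\otimes 1$ acting on $\partial_i e_{j_n\cdots j_1}$ (the extreme-left vertical at $n+1$ lies strictly to the left of every pair horizontal and so adds no new crossings, and $|s_r|$ is unchanged); (III) $\pi(0) < n+1$ and $n+1$ is paired with some $l \in \{1,\ldots,\pi(0)-1\}$, which bijects with $\tau \in C(n)$ built on the shortened word $j_n\cdots\check{j_l}\cdots j_1$ and accounts for part of the contraction sum, the overall sign flipping via $|p(\pi)| = |p(\tau)|+1$.

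The main technical obstacle is that two families of terms do not correspond to valid partitions in $C(n+2)$: the annihilations $l_{j_{n+1}} e_{s_l(\sigma)}$, which would force $n+1$ and a left singleton of $\sigma$ into a disallowed left–left pair, together with those contraction-sum contributions in which the induced pair $(n+1,l)$ would place $l$ into the left area of $\pi$. I expect these two families to cancel in pairs under the natural bijection $(\sigma, s_l(\sigma)_k) \leftrightarrow (\tau,\, l = s_l(\sigma)_k)$, where $\tau$ is $\sigma$ with the singleton $s_l(\sigma)_k$ deleted and relabeled. Verifying the cancellation reduces to a $q$-exponent identity relating $\mathrm{cross}(\sigma)$ to $\mathrm{cross}(\tau)$, obtained by counting exactly how many pair horizontals of $\sigma$ the straight-up singleton at $s_l(\sigma)_k$ crosses and tracking how x-coordinates shift under the relabeling. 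The $-|s_r(\pi)|$ correction in the exponent is precisely what absorbs the height-$1$ crossings of $(0,\pi(0))$ against the right singletons and makes the whole accounting consistent; carrying out this crossings bookkeeping is the most delicate step of the proof.
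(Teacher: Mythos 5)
Your proposal follows essentially the same route as the paper's proof: induction via the recursion $e_{j_{n+1}\cdots j_1}=A_{j_{n+1}}e_{j_n\cdots j_1}-\sum_l q^{n-l}\delta_{j_{n+1}j_l}e_{j_n\cdots\check{j_l}\cdots j_1}$, the Leibniz rule, splitting $A_{j_{n+1}}$ into creation and annihilation parts, classifying partitions in $C(n+2)$ by the role of the vertex $n+1$, and cancelling the annihilation terms against the contraction terms whose pair $(n+1,l)$ would land $l$ in the left area, via exactly the singleton-deletion bijection and crossing-count identity $\mathrm{cross}(\pi)-\mathrm{cross}(\sigma)=n-k-(|s_l(\pi)|-m)$ that the paper uses. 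The remaining bookkeeping you defer is precisely what the paper carries out, so the plan is correct and matches the published argument.
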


\begin{proof}
We will prove the formula by induction over $n$. For $n=1$, it says that $\partial_i e_j=\delta_{ij} e_0\otimes e_0$, which is clearly true. Assume now that the formula is true for $n\geq1$ and let us show it for $n+1$.
By the definition of $q$-Gaussians, we have the recursion for noncommutative polynomials $e_w$ 
$$e_{j_{n+1}\cdots j_1} = A_{j_{n+1}} e_{j_n \cdots j_1} - \sum_{k=1}^n \delta_{j_{n+1}  j_k} q^{n-k} [j_n \cdots \check{j}_k \cdots j_1], $$
which induces
\begin{multline*}
\partial_i e_{j_{n+1}\cdots j_1} = \delta_{i j_{n+1}} 1 \otimes e_{j_n \cdots j_1}+ (A_{j_{n+1}} \otimes 1) \cdot \partial_i e_{j_n \cdots j_1}\\ - \sum_{k=1}^n \delta_{j_{n+1}  j_k} q^{n-k} \partial_i [j_n \cdots \check{j}_k \cdots j_1]. 
\end{multline*}
By using the induction assumption, we can compute 
\begin{align*}
&(A_{j_{n+1}} \otimes 1) \cdot \partial_i e_{j_n \cdots j_1}=
\sum_{\pi \in C(n+1)} (-1)^{|p(\pi)|-1} q^{\mathrm{cross}(\pi)-|s_r(\pi)|} \delta_{p(\pi)} A_{j_{n+1}}e_{s_l(\pi)} \otimes e_{s_r(\pi)} \\
&= \sum_{\pi \in C(n+1)} (-1)^{|p(\pi)|-1} q^{\mathrm{cross}(\pi)-|s_r(\pi)|} \delta_{p(\pi)} e_{j_{n+1} s_l(\pi)} \otimes e_{s_r(\pi)}\\
&\quad + \sum_{\pi \in C(n+1)} (-1)^{|p(\pi)|-1}\times\\
&\qquad \times
\Bigl\{\sum_{m =1}^{|s_l(\pi)|} q^{\mathrm{cross}(\pi)-|s_r(\pi)|+|s_l(\pi)|-m}
 \delta_{p(\pi)} \delta_{j_{n+1} j_{s_l(\pi)_m}} [s_l(\pi)_{|s_l(\pi)|} \cdots \check{s_l(\pi)_m} \cdots s_l(\pi)_1] \otimes e_{s_r(\pi)} \Bigr\}.
\end{align*}
On the other hand, we can compute $\sum_{k=1}^n \delta_{j_{n+1} j_k} q^{n-k} \partial_i [j_n \cdots \check{j}_k \cdots j_1]$ as
\begin{eqnarray*}
\sum_{k=1}^n \sum_{\sigma \in C(n)} \delta_{j_{n+1} j_{k}}  (-1)^{|p(\sigma)|-1} q^{\mathrm{cross}(\sigma)-|s_r(\sigma)|+n-k} \delta_{p(\sigma)} e_{s_l(\sigma)} \otimes e_{s_r(\sigma)}
\end{eqnarray*}
where $\sigma$ acts on the word $j_n \cdots \check{j}_k \cdots j_1.$ By the same argument as in Proposition \ref{formula}, we can see that the last sum of $(A_{j_{n+1}} \otimes 1) \cdot \partial_i e_{j_n \cdots j_1}$ is canceled by $-\sum_{k=1}^n \delta_{j_{n+1} j_k} q^{n-k} \partial_i [j_n \cdots \check{j}_k \cdots j_1].$
Indeed, for each $\pi \in C(n+1)$ and $m \in \{1,\ldots,|s_l(\pi)|\}$, we take $k = s_l(\pi)_m$ and $\sigma = \pi \setminus s_l(\pi)_m \in C(n).$ Then $|p(\pi)| = |p(\sigma)|$. By counting the crossing points on the line $s_l(\pi)_m$, we have 
$$\mathrm{cross}(\pi) - \mathrm{cross}(\sigma) = n - k- (|s_l(\pi)|-m)  $$
Since we have $|s_r(\pi)| = |s_r(\sigma)|$, we have
$$\mathrm{cross}(\pi)-|s_r(\pi)|+|s_l(\pi)|-m = \mathrm{cross}(\sigma)-|s_r(\sigma)|+n-k $$
For example, we take $\pi \in C(7)$ represented by
\begin{center}
 \begin{tikzpicture}
 \foreach \x in {6,5,4,3,2,1,0}
 \coordinate (\x) at (6-\x,0) node [below] at (\x) {\x};
 \draw (0)--(6,1);
 \draw (3)--(3,1);
 \draw (6,1)--(3,1);
 \draw (6)--(0,2);
 \draw (1)--(5,2);
 \draw (0,2)--(5,2);
 \draw (4)--(2,3);
 \draw (5)--(1,3);
 \draw (2)--(4,3);
 \end{tikzpicture}
 \end{center}
and take $m =1$. Then, we take $k = 4$ and $\sigma \in C(6)$ represented by 
\begin{center}
 \begin{tikzpicture}
 \foreach \x in {5,4}
 \coordinate (\x) at (5-\x,0) node [below] at (\x) {\x};
 \foreach \x in {3,2,1,0}
 \coordinate (\x) at (6-\x,0) node [below] at (\x) {\x};
 \draw (0)--(6,1);
 \draw (3)--(3,1);
 \draw (6,1)--(3,1);
 \draw (5)--(0,2);
 \draw (1)--(5,2);
 \draw (0,2)--(5,2);
 \draw (4)--(1,3);
 \draw (2)--(4,3);
 \begin{scope}[dashed]
 \draw (2,0)--(2,3);
 \end{scope}[dashed]
 \end{tikzpicture}
 \end{center}
In this case, we have 
$$
|p(\pi)|=|p(\sigma)| = 2, \qquad  |s_l(\pi)|=  2, \qquad |s_r(\pi)|=|s_r(\sigma)| = 1$$
and thus
$$
\mathrm{cross}(\pi)-|s_r(\pi)|+|s_l(\pi)|-m =5-1+2-1 = 5= 4-1+6-4=\mathrm{cross}(\sigma)- |s_r(\sigma)| + n-k . 
$$
The remaining terms in $-\sum_{k=1}^n \delta_{j_{n+1} j_k} q^{n-k} \partial_i [j_n \cdots \check{j}_k \cdots j_1]$, which are characterized by $\sigma \in C(n)$ and $k \le \sigma(0)$, are corresponding to partitions $\sigma' \in C(n+2)$ which connect $n+1$ to $k$ and satisfy $\sigma' \setminus (n+1,k) = \sigma$. Then $|p(\sigma')| = |p(\sigma)| + 1 $ and by counting the crossing points on $(n+1,k)$ we have
$$\mathrm{cross}(\sigma') - \mathrm{cross}(\sigma)= n-k.  $$
We also have $|s_r(\sigma')|=|s_r(\sigma)|$, and thus
$$\mathrm{cross}(\sigma') -|s_r(\sigma')| = \mathrm{cross}(\sigma) - |s_r(\sigma)| + n -k. $$
For example, consider $\sigma \in C(7)$ represented by
\begin{center}
 \begin{tikzpicture}

 \foreach \x in {6,5,4,3,2,1,0}
 \coordinate (\x) at (6-\x,0) node [below] at (\x) {\x};
 \draw (0)--(6,1);
 \draw (3)--(3,1);
 \draw (6,1)--(3,1);
 \draw (6)--(0,2);
 \draw (2)--(4,2);
 \draw (0,2)--(4,2);
 \draw (4)--(2,3);
 \draw (5)--(1,3);
 \draw (1)--(5,3);
 \end{tikzpicture}
 \end{center}
and take $k=2$. Then we obtain $\sigma' \in C(9)$ represented by
\begin{center}
 \begin{tikzpicture}
 \foreach \x in {8,7,6,5,4,3,2,1,0}
 \coordinate (\x) at (8-\x,0) node [below] at (\x) {\x};
 \draw (0)--(8,1);
 \draw (7)--(1,3);
 \draw (6)--(2,4);
 \draw (5)--(3,4);
 \draw (4)--(4,1);
 \draw (3)--(5,3);
 \draw (1)--(7,4);
 \draw (1,3)--(5,3);
 \draw (8,1)--(4,1);
 \begin{scope}[dashed]
 \draw (8)--(0,2);
 \draw (2)--(6,2);
 \draw (0,2)--(6,2);
 \end{scope}[dashed]
 \end{tikzpicture}
 \end{center}
In this case, 
$$|p(\sigma')|=3=|p(\sigma)|+1,\qquad \mathrm{cross}(\sigma')- \mathrm{cross}(\sigma) =9-4= 5=7-2.$$
The term $\delta_{i j_{n+1}} 1 \otimes e_{j_n \cdots j_1}$ is given by the partitions in $C(n+2)$ which connect $0$ to $n+1$. The sum $$\sum_{\pi \in C(n+1)} (-1)^{|p(\pi)|-1} q^{\mathrm{cross}(\pi)-|s_r(\pi)|} \delta_{p(\pi)} e_{j_{n+1} s_l(\pi)} \otimes e_{s_r(\pi)}$$ 
is given by the partitions in $C(n+2)$ such that $n+1$ is a singleton. Therefore $\partial_i e_{j_{n+1}\cdots j_1}$ is given by the partitions in $C(n+2)$ and we have proved the claimed formula for $n+1$.
\end{proof}

\begin{exam}
For $n=3$, the proposition tells us 
$$\partial_i e_{j_3 j_2 j_1}= \delta_{i j_3} 1 \otimes e_{j_2 j_1} + \delta_{i j_2} e_{j_3} \otimes e_{j_1} + \delta_{i j_1} e_{j_3 j_2} \otimes 1 - q \delta_{i j_2}\delta_{j_3 j_1}  1 \otimes 1. $$
The following four partitions characterize each term.  
\begin{center}
\begin{tabular}{cc}
 \begin{tikzpicture}
 \foreach \x in {3,2,1,0}
 \coordinate (\x) at (3-\x,0) node [below] at (\x) {\x};
 \draw (0)--(3,1);
 \draw (3)--(0,1);
 \draw (3,1)--(0,1);
 \draw (2)--(1,2);
 \draw (1)--(2,2);
 \end{tikzpicture}
 \quad
&
\quad

 \begin{tikzpicture}
 \foreach \x in {3,2,1,0}
 \coordinate (\x) at (3-\x,0) node [below] at (\x) {\x};
 \draw (0)--(3,1);
 \draw (2)--(1,1);
 \draw (3,1)--(1,1);
 \draw (3)--(0,2);
 \draw (1)--(2,2);
 \end{tikzpicture}
 \\
 \begin{tikzpicture}
 \foreach \x in {3,2,1,0}
 \coordinate (\x) at (3-\x,0) node [below] at (\x) {\x};
 \draw (0)--(3,1);
 \draw (1)--(2,1);
 \draw (3,1)--(2,1);
 \draw (3)--(0,2);
 \draw (2)--(1,2);
 \end{tikzpicture}
 \quad
&
 \quad
 \begin{tikzpicture}
 \foreach \x in {3,2,1,0}
 \coordinate (\x) at (3-\x,0) node [below] at (\x) {\x};
 \draw (0)--(3,1);
 \draw (2)--(1,1);
 \draw (3,1)--(1,1);
 \draw (3)--(0,2);
 \draw (1)--(2,2);
 \draw (0,2)--(2,2);
 \end{tikzpicture}
 
 \end{tabular}
\end{center}

\end{exam}
\begin{cor}\label{Lipschitz_for_q_Gaussian}
The conjugate system of $q$-Gaussians is Lipschitz conjugate for $-1< q <1$.
\end{cor}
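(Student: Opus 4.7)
Plan: We need to show that $\xi_i\in\mathrm{dom}(\overline{\partial_j})$ and $\overline{\partial_j}\xi_i\in W^*(A)\overline{\otimes}W^*(A)$ for all $i,j\in[d]$. The strategy is to apply $\partial_j$ termwise to the explicit series for $\xi_i$ from Corollary \ref{ConcreteFormula}, expand each derivative via the combinatorial formula of Proposition \ref{ncderivative}, and show convergence in the projective tensor norm $\|\xi\|_\pi=\inf\{\sum_k\|a_k\|\,\|b_k\|:\xi=\sum_k a_k\otimes b_k\}$. Since $\|\cdot\|_\pi$ dominates both the $C^*$-tensor norm on $W^*(A)\overline{\otimes}W^*(A)$ and the $L^2\otimes L^2$ norm, convergence in this norm will simultaneously deliver both requirements.

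The first step is to introduce partial sums $\xi_i^{(N)}=\sum_{m=0}^{N}(-1)^m q^{m(m+1)/2}\sum_{|w|=m}Q[r_{iw}^*e_w]\in\mathbb{C}\langle A\rangle$, where $Q[\eta]$ is the unique polynomial with $Q[\eta]e_0=\eta$. By Corollary \ref{ConcreteFormula}, $\xi_i^{(N)}\to\xi_i$ in operator norm and hence in $L^2$. Because a conjugate system exists, $\partial_j$ is closable, and so it suffices to prove that $(\partial_j\xi_i^{(N)})_N$ is Cauchy in $\|\cdot\|_\pi$.

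Next, I would expand each $r_{iw}^*e_w$ in the Fock basis $\{e_v\}_{|v|=2m+1}$ and apply Proposition \ref{ncderivative} to each polynomial $Q[e_v]$. This expresses the $m$-th block of $\partial_j\xi_i^{(N)}$ as a finite linear combination of elementary tensors $e_{s_l(\pi)}\otimes e_{s_r(\pi)}$ indexed by partitions $\pi\in C(2m+2)$. Using the triangle inequality in $\|\cdot\|_\pi$, this block is dominated by $|q|^{m(m+1)/2}\,K(m)$, where $K(m)$ assembles: the combinatorial factor $|q|^{\mathrm{cross}(\pi)-|s_r(\pi)|}$, crudely at most $|q|^{-(2m+1)}$; the operator norms $\|e_{s_l(\pi)}\|$ and $\|e_{s_r(\pi)}\|$, estimated via Bożejko's $q$-Haagerup inequality by polynomials in the particle number times $\sqrt{[n]_{|q|}!}$ with $n\leq 2m$; the cardinality $|C(2m+2)|\leq(2m+2)!$; the expansion coefficients of $r_{iw}^*e_w$, controlled by $\|r_{iw}^*\|\leq w(q)^{-(m+1)/2}$ from Lemma \ref{FreeRightAnnihilation}; and the $d^m$ choices of $w$. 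Each of these grows at most factorially in $m$.

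Since $|q|^{m(m+1)/2}$ decays super-exponentially while $K(m)$ grows at most factorially, the series $\sum_m|q|^{m(m+1)/2}K(m)$ converges by the ratio test, and so $(\partial_j\xi_i^{(N)})_N$ is Cauchy in $\|\cdot\|_\pi$. The limit $\eta_{j,i}$ lies in $W^*(A)\overline{\otimes}W^*(A)$, and $\|\cdot\|_\pi$-convergence implies $L^2\otimes L^2$-convergence; closability then gives $\xi_i\in\mathrm{dom}(\overline{\partial_j})$ with $\overline{\partial_j}\xi_i=\eta_{j,i}\in W^*(A)\overline{\otimes}W^*(A)$, which is the Lipschitz property. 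The main obstacle is the bookkeeping in the previous step: one has to check that the several combinatorial and analytic ingredients assembled into $K(m)$ really only produce factorial (not super-exponential) growth; the delicate point is the factor $|q|^{-|s_r(\pi)|}$ from Proposition \ref{ncderivative}, which is however bounded by $|q|^{-(2m+1)}$ and is therefore absorbed by the Gaussian factor $|q|^{m(m+1)/2}$. This is precisely the ``quadratic beats linear'' mechanism emphasized in the Introduction and already used in the proofs of Theorem \ref{conjugatesystem} and Corollary \ref{ConcreteFormula}, which is what makes all of our estimates succeed on the entire interval $-1<q<1$.
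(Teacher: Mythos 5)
Your proposal follows essentially the same route as the paper's proof: termwise application of $\partial_j$ to the series from Corollary \ref{ConcreteFormula}, expansion of each $\partial_j e_v$ via Proposition \ref{ncderivative}, and an estimate combining Bo\.{z}ejko's Haagerup-type inequality, the bound on the free right annihilation operators, and $|C(2m+2)|\le(2m+2)!$, with the factor $|q|^{m(m+1)/2}$ defeating the factorial growth via the ratio test. The only (harmless) deviations are your use of the projective tensor norm in place of the operator norm on $B(\mathcal{F}_q(H)^{\otimes 2})$ and the crude bound $|q|^{-|s_r(\pi)|}\le|q|^{-(2m+1)}$ where the paper implicitly uses $\mathrm{cross}(\pi)\ge|s_r(\pi)|$; both work.
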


\begin{proof}
Let $i,j \in [d]$. 
We will check that the following sum
$$\partial_j \xi_i=\sum_{w \in [d]^*} (-1)^{|w|} q^{\frac{(|w|+1)|w|}{2}} \partial_j  r_{i w}^* e_w $$
converges in the operator norm of $B(\mathcal{F}_q(H)^{\otimes 2})$. Since $r_{iw}^* e_w$ is a linear span of $\{e_v\}_{|v| = 2m+1}$ for each $|w| = m$, we can write $r_{iw}^* e_w = \sum_{|v| = 2m+1} \alpha_v e_v $. (Note that $\alpha_v$ depends on $w$, but since the following estimates do not depend on $w$ we will suppress this in the notation.) Note that $\alpha_v e_0 = r_v r_{iw}^* e_w $ and we can estimate $$|\alpha_v| = \| r_v r_{iw}^* e_w\|_q \le \sqrt{w(q)}^{-3m-2} \sqrt{[m]_{|q|}!}.$$
Therefore we can estimate, by the triangle inequality,
$$ \| \partial_j r_{iw}^* e_w  \| \le \sum_{|v| = 2m+1} \sqrt{w(q)}^{-3m-2} \sqrt{[m]_{|q|}!} \ \| \partial_j e_v \|.$$
By Proposition \ref{ncderivative} and Bożejko's Haagerup type inequality \cite{Bozejko99}, we obtain
\begin{eqnarray*}
\| \partial_j e_v \| \le \sum_{\pi \in C(2m+2)} \| e_{s_l(\pi)} \|\cdot \| e_{s_r(\pi)} \| 
\le C_{|q|}^3(2m+1)^2[2m]_{|q|}!  \ |C(2m+2)| 
\end{eqnarray*}
where $|C(2m+2)|$ is the cardinality of $C(2m+2)$. Since we can regard $C(2m+2)$ as a subset of the symmetric group of degree $2m+2$, we obtain $|C(2m+2)| \le (2m+2)!$.
Therefore we have
\begin{align*}
 \sum_{w \in [d]^*}\| &(-1)^{|w|} q^{\frac{(|w|+1)|w|}{2}} \partial_j  r_{i w}^* e_w \| 
\le \sum_{m=0}^{\infty} \sum_{|w|=m} |q|^{\frac{(m+1)m}{2}} \| \partial_j  r_{i w}^* e_w \| \\
&\le \sum_{m=0}^{\infty} \sum_{|w|=m} |q|^{\frac{(m+1)m}{2}} \sum_{|v| = 2m+1} \sqrt{w(q)}^{-3m-2} \sqrt{[m]_{|q|}!} \ \| \partial_j e_v \| \\
&\le C_{|q|}^3 \sum_{m=0}^{\infty} \sum_{|w|=m} |q|^{\frac{(m+1)m}{2}} \times\\
&\qquad\times \sum_{|v| = 2m+1}\sqrt{w(q)}^{-3m-2} \sqrt{[m]_{|q|}!} (2m+1)^2[2m]_{|q|}!  (2m+2)! \\
&= C \sum_{m=0}^{\infty} |q|^{\frac{(m+1)m}{2}} (2m+1)^2 (2m+2)! \left(\frac{d}{\sqrt{w(q)}}\right)^{3m} \sqrt{[m]_{|q|}!} \ [2m]_{|q|}! .
\end{align*}
where 
$$C=(d C_{|q|}^3) /{w(q)}$$ 
is a constant which is independent of $m$ and the last sum is convergent by the ratio test. This implies $ \xi_i \in \mathrm{dom}(\overline{\partial_j})$ and $\overline{\partial_j} \xi_i \in W^*(A) \overline{\otimes} W^*(A).$
\end{proof}

%%%%%%%%%%%%%%%%%%

\begin{rem}\label{qijcase}
It is likely that one can extend our results to more general deformations of Gaussian algebras; in particular, the case of mixed $q_{ij}$-Gaussians is quite straightforward. In this case the $q$-commutation relations
$ a_i a_j^* - qa_j^* a_i = \delta_{i j}$ are replaced by
$ a_i a_j^* - q_{ij}a_j^* a_i = \delta_{i j}$, 
where the parameters $(q_{ij})_{1 \le i,j \le d}$ just have to satisfy  $-1\leq q_{ij}=q_{ji}\leq 1$ and we are still looking on the von Neumann algebra generated by all $A_i:=a_i+a_i^*$. As for the $q$-case there exists a representation of these operators as creation or annihilation operators on the Fock space
\cite{Sp93,BS94,JSW}. The main difference is that in all formulas the factor $q$ for a crossing has to be replaced by $q_{ij}$, where $i$ and $j$ are the indices of the two crossing strings (where one should note that we only get a non-vanishing contribution from a pairing if it pairs the same indices). To be more precise, the left creation operators are defined in the same way as in Section \ref{Preliminaries}, but the left annihilation operators are defined by
$$l_i e_{j_n j_{n-1}\cdots j_1} = \sum_{k=1}^n \delta_{i j_{k}} q_{i j_n}q_{i j_{n-1}} \cdots q_{i j_{k+1}} e_{j_n \cdots \check{j_k} \cdots j_1}.$$

This induces the same combinatorial structure for $D_i e_{j_n j_{n-1}\cdots j_1}$ as in Proposition \ref{formula} as well as for $\partial_i e_{j_n j_{n-1}\cdots j_1}$ as in Proposition \ref{ncderivative}, if one replaces $q$ by the appropriate $q_{ij}$
for crossings according to our drawings in Section \ref{MultiVariable} and Section \ref{LipschitzConjugate}. For example, for the drawing
\begin{center}
    \crossing{5}{3}{0/3,1/5,2/4}{},
\end{center}
%crossings which consist of $(1,5)$ and $(2,4)$ with $q_{j_1j_2}$, $(0,3)$ and $(2,4)$ with $q_{i j_2}$, $(0,3)$ and $(1,5)$ with $q_{i j_1}$, and this corresponds with 
the corresponding term in $D_i e_{j_5 j_4 j_3 j_2 j_1}$ is given by
$$q_{j_1 j_2}^2q_{i j_2}q_{i j_1} \delta_{i j_3} \delta_{j_1 j_5} \delta_{j_2j_4} e_0.$$

This is equal to $q_{j_5 j_4}^2q_{i j_4}q_{i j_5} \delta_{i j_3} \delta_{j_1 j_5} \delta_{j_2j_4} e_0 $ and $q_{j_2 j_1}^2q_{j_2 i}q_{j_1 i} \delta_{i j_3} \delta_{j_1 j_5} \delta_{j_2j_4} e_0$ since $\delta_{i j}$ is the Kronecker's delta and $(q_{ij})_{1 \le i,j \le d}$ is symmetric. Therefore this term depends only on the crossings. For each $\pi \in B(n+1)$, we denote by $q^{\mathrm{cross}(\pi)} \delta_{p(\pi)}$ the coefficient as above. Then we have the same formula for a normalized dual system for mixed $q_{ij}$-Gaussians as in Proposition \ref{formula}.
Similarly, $\partial_i e_{j_n j_{n-1}\cdots j_1}$ is characterized by $C(n+1)$ in Section \ref{LipschitzConjugate} and we count crossings as above, except for crossings of right singletons and the pair that includes $0$. 

We can also derive the conjugate system of mixed $q_{ij}$-Gaussians from this combinatorics. As in the proof of Theorem \ref{conjugatesystem}, we separate the crossings into two sets, the left area and the right area. Moreover, crossings in the left area correspond to the number of inversions of permutations which are induced by pair partitions, while crossings in the right area are independent of the choices of the pair partitions. By the same arguments as in the $q$-case we get the
%For mixed $q_{ij}$-Gaussians, we also crossings in left area and right area, for example, crossings in left area correspond with $q_{j_1 j_2}$, and crossings in right area correspond with $q_{j_1 j_2}q_{i j_2}q_{i j_1}$. In general case, for $\pi \in B(2m+2)$ with $\pi(0)=m+1$ and a word $w = e_{j_{2m+1} j_{2m}\cdots j_1}$, crossings in left area correspond with
%$$\prod_{\substack{1\le k \le m \\ 0 \le l \le k-1}} q_{j_k j_l}$$
%where we set $j_0=i$. Moreover, we can see that 
%$$ \langle D_i e_w , e_0 \rangle = (-1)^m\left(\prod_{\substack{1\le k \le m \\ 0 \le l \le k-1}} q_{j_k j_l}\right)\langle e_{j_{2m+1}\cdots j_{m+2}}, e_{j_{m}\cdots j_{1}} \rangle$$
%where we consider the inner product of $q_{ij}$-deformed Fock space. By using this, 
following formula for the conjugate system $(\xi_1,\ldots,\xi_d)$ for mixed $q_{ij}$-Gaussians $(A_1,\ldots,A_d)$:
$$\xi_i = \sum_{w \in [d]^*} (-1)^{|w|} q(w) r_{iw}^* e_w,$$ where
$$q(w)= \prod_{\substack{1\le k \le m \\ 0 \le l \le k-1}} q_{j_k j_l}\qquad\text{for}\qquad w=j_{m}\cdots j_{1}.$$
Moreover, we can extend Lemma \ref{FreeRightAnnihilation} to the $q_{ij}$-setting since Theorem 1 in \cite{Bozejko98} includes the $q_{ij}$-case. We also have Haagerup's inequality for the $q_{ij}$-setting, according to Theorem 26 in \cite{Krolak02}.

The factor $q(w)$ replaces now the factor $q^{m(m+1)/2}$, which was in the end responsible for the uniform convergence of all appearing power series expansions. 
As a consequence, if $\max_{i,j\in[d]}|q_{ij}| < 1$, then all our estimates work in the same way and we get thus that also the Lipschitz conjugate system for the mixed $q_{i j}$-Gaussians exists. 
\end{rem}

\section{Power series expansions of the conjugate variables and free Gibbs potential}\label{FreeGibbsPotential}

Finally, we also want to address estimates for the conjugate system in terms of noncommutative power series in the operators. This is relevant if we want to find a potential, such that our $q$-distribution is the corresponding free Gibbs state. 

%This means that we can write our conjugate variables as cyclic derivatives of a ``potential''. We do not want to go deeper into this, but just want to point out that there is a formula how to get the potential from the conjugate variables. This formula is essentially on an algebraic level; if the conjugate variables are polynomials then this gives the potential; if the conjugate variables can only be approximated by polynomials, then one has to make sure that the formula for the potential also converges. If one can write the conjugate variables as uniformly convergent power series in the operators, then the expression for the potential does also converge. Hence our following considerations show that the $q$-Gaussians are indeed free Gibbs states for all $q$ with $-1< q< 1$.

In order to write the conjugate system as such noncommutative power series, we need to represent $r_{iw}^*e_w$ ($i \in [d]$ and $w \in [d]^*$) as a noncommutative polynomial. 
In Theorem 3.1 of \cite{EP03}, one can find the concrete formula of $Q[w]$ (see Section \ref{Preliminaries} for the definition of $Q[w]$). Here, we present this formula, actually its extension for mixed $q_{ij}$-Gaussians, by using our combinatorics. 

Let $D(n)$ be the set of partitions on $n$ vertices $n>n-1>\cdots>1$ which consist of either singletons or pair partitions. We count crossings of $\pi \in D(n)$ by using hight as in Sections \ref{MultiVariable} and \ref{LipschitzConjugate}. 
\begin{prop}\label{Effros-Popa}
Consider the setting and the notations for the $q_{ij}$-Gaussians as in Remark \ref{qijcase}. Then,
for $j_1,\ldots,j_n \in [d]$, we have
$$e_{j_n\cdots j_1} = \sum_{\pi \in D(n)}(-1)^{|p(\pi)|} q^{\mathrm{cross}(\pi)} \delta_{p(\pi)}  A^{s(\pi)}e_0$$
where $A^{s(\pi)}=A_{j_{k_s}}\cdots A_{j_{k_1}}$ for $s(\pi)=\{k_s>\cdots>k_1\}$.
\end{prop}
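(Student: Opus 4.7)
The natural proof is induction on $n$, driven by the same recursion that defined $e_{j_n\cdots j_1}$. In the $q_{ij}$-setting of Remark \ref{qijcase}, this recursion reads
$$e_{j_{n+1}j_n\cdots j_1}=A_{j_{n+1}}e_{j_n\cdots j_1}-\sum_{k=1}^n\delta_{j_{n+1}j_k}\,q_{j_{n+1}j_n}\cdots q_{j_{n+1}j_{k+1}}\,e_{j_n\cdots\check{j}_k\cdots j_1},$$
and the base cases $n=0,1$ are immediate. For the inductive step, I would substitute the hypothesis into both $e_{j_n\cdots j_1}$ and each $e_{j_n\cdots\check{j}_k\cdots j_1}$, and then match the resulting sum with the target $\sum_{\pi\in D(n+1)}(-1)^{|p(\pi)|}q^{\mathrm{cross}(\pi)}\delta_{p(\pi)}A^{s(\pi)}e_0$ by splitting $D(n+1)$ according to the role of the vertex $n+1$.

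If $n+1$ is a singleton in $\pi$, then $\sigma:=\pi\setminus\{n+1\}\in D(n)$ satisfies $|p(\sigma)|=|p(\pi)|$, $\delta_{p(\sigma)}=\delta_{p(\pi)}$, and $A^{s(\pi)}=A_{j_{n+1}}A^{s(\sigma)}$; because $n+1$ sits at the extreme left of the diagram with its vertical going straight to the top, no new crossings are introduced, so $\mathrm{cross}(\pi)=\mathrm{cross}(\sigma)$. Summing these contributions reassembles the term $A_{j_{n+1}}e_{j_n\cdots j_1}$ exactly. If instead $n+1$ is paired with some $k\in\{1,\ldots,n\}$, removal of the pair $(k,n+1)$ together with order-preserving relabelling gives a bijection with $\sigma'\in D(n-1)$; the identities $|p(\pi)|=|p(\sigma')|+1$, $\delta_{p(\pi)}=\delta_{j_{n+1}j_k}\delta_{p(\sigma')}$, and $A^{s(\pi)}=A^{s(\sigma')}$ line up with the minus sign, Kronecker factor, and operator part appearing in the recursion.

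The crux is then showing that the extra crossings introduced by inserting the pair $(k,n+1)$ into $\sigma'$ contribute exactly the factor $q_{j_{n+1}j_n}\cdots q_{j_{n+1}j_{k+1}}$ --- equivalently, one crossing of weight $q_{j_{n+1}j_l}$ per vertex $l\in\{k+1,\ldots,n\}$. Using the drawing convention of Sections \ref{MultiVariable} and \ref{LipschitzConjugate} (pair $(a,b)$ with $a<b$ drawn at height $a$, singletons going straight up to the top), I would split this into three sub-cases: (i) $l$ is a singleton, and its vertical crosses the new bar at height $k$; (ii) $l$ belongs to a pair entirely inside $\{k+1,\ldots,n\}$, in which case both endpoints cross the new bar, producing the double factor $q_{j_{n+1}j_l}q_{j_{n+1}j_{l'}}=q_{j_{n+1}j_l}^2$ enforced by $\delta_{j_lj_{l'}}$; (iii) $l$ belongs to a pair $(l',l)$ straddling $k$ with $l'<k<l$, in which case the required crossing is produced not by the new bar but by the new vertical at $k$ cutting the old bar $(l',l)$ at height $l'$.

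The main obstacle is precisely the straddling sub-case (iii), since the extra crossing there is not produced by the new bar itself; one has to verify that the height convention really puts the vertical at $k$ above the bar of $(l',l)$, so that the expected factor $q_{j_kj_l}=q_{j_{n+1}j_l}$ actually appears. Once this bookkeeping is settled, summing the pair-type contributions over $(\sigma',k)$ reproduces the second sum in the recursion, and the induction closes; the scalar $q$-statement is the special case $q_{ij}\equiv q$.
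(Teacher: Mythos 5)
Your proposal is correct and follows essentially the same route as the paper: induction on $n$ via the recursion for $e_{j_{n+1}\cdots j_1}$, splitting $D(n+1)$ according to whether the vertex $n+1$ is a singleton (recovering the term $A_{j_{n+1}}e_{j_n\cdots j_1}$) or paired with some $k$ (matching the subtracted sum via the bijection $\tilde\sigma\mapsto(\sigma,k)$). Your three-way case analysis of the new crossings is in fact more detailed than the paper, which simply asserts the identity $q^{\mathrm{cross}(\tilde\sigma)}\delta_{p(\tilde\sigma)}=q_{j_{n+1}j_n}\cdots q_{j_{n+1}j_{k+1}}q^{\mathrm{cross}(\sigma)}\delta_{p(\sigma)}\delta_{j_{n+1}j_k}$ without spelling out the straddling sub-case.
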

\begin{proof}
We prove this formula by induction on $n$. For $n=1$ it just says
$e_j=(-1)^0 q^0 A_je_0$, which is clearly true. So assume we know it for $n\geq 1$ and let us prove it for $n+1$. We have

\begin{eqnarray*}
 e_{j_{n+1}\cdots j_1}&=& A_{j_{n+1}}e_{j_n \cdots j_1} - \sum_{k=1}^n \delta_{j_{n+1}j_k}q_{j_{n+1} j_n}\cdots q_{j_{n+1} j_{k+1}} e_{j_n \cdots \check{j_k} \cdots j_1}\\
 &=& \sum_{\pi \in D(n)}(-1)^{|p(\pi)|} q^{\mathrm{cross}(\pi)} \delta_{p(\pi)} A^{j_{n+1} s(\pi)}e_0\\
 & \ & - \sum_{k=1}^n \delta_{j_{n+1}j_k}\sum_{\sigma \in D(n-1)} q_{j_{n+1} j_n}\cdots q_{j_{n+1} j_{k+1}} (-1)^{|p(\sigma)|} q^{\mathrm{cross}(\sigma)} \delta_{p(\sigma)} A^{s(\sigma)}e_0,
 \end{eqnarray*}
 where $\sigma \in D(n-1)$ acts on $j_n \cdots \check{j_k} \cdots j_1$.
 The first term corresponds to $\tilde{\pi} \in D(n+1)$ such that $\tilde{\pi}(n+1)$ is a singleton. For the second term we take,
 for each $k \in \{1,\ldots,n\}$ and $\sigma \in D(n-1)$, the $\tilde{\sigma} \in D(n+1)$ such that $n+1$ is connected to $k$ and $\tilde{\sigma} \setminus (n+1,k)=\sigma$. Then we have
 \begin{eqnarray*}
 |p(\tilde{\sigma})|= |p(\sigma)|+1 
 \qquad\text{and}\qquad
 q^{\mathrm{cross}(\tilde{\sigma})}\delta_{p(\tilde{\sigma})}= q_{j_{n+1}j_n} \cdots q_{j_{n+1}j_{k+1}} q^{\mathrm{cross}(\sigma)}\delta_{p(\sigma)}\delta_{j_{n+1}j_k}. 
 \end{eqnarray*}
 Thus the second term corresponds to such $\tilde{\sigma} \in D(n+1)$ and we have
 \begin{eqnarray*}
 e_{j_{n+1}\cdots j_1} &=& \sum_{\substack{\tilde{\pi} \in D(n+1)\\ \tilde{\pi}(n+1)\ \mathrm{is \ a \ singleton}}}(-1)^{|p(\tilde{\pi})|} q^{\mathrm{cross}(\tilde{\pi})} \delta_{p(\tilde{\pi})} A^{ s(\tilde{\pi})}e_0\\
 &\ & + \sum_{\substack{\tilde{\sigma} \in D(n+1)\\ \tilde{\sigma}(n+1)\ \mathrm{is \ not \ a \ singleton}}}(-1)^{|p(\tilde{\sigma})|} q^{\mathrm{cross}(\tilde{\sigma})} \delta_{p(\tilde{\sigma})} A^{ s(\tilde{\sigma})}e_0 \\
 &=& \sum_{\pi \in D(n+1)}(-1)^{|p(\pi)|} q^{\mathrm{cross}(\pi)} \delta_{p(\pi)}  A^{s(\pi)}e_0.
 \end{eqnarray*}
\end{proof}
Using this we can rewrite our conjugate variables as noncommutative power series
in $A_1,\ldots,A_d$. (In the following we will, for simplicity, again restrict to the $q$-case, though the $q_{ij}$-case can be treated in the same way.) The main point will be to see that we have good estimates for the operator norms of the summands in these series; this will be similar to the proof of Corollary \ref{Lipschitz_for_q_Gaussian}.
 Let us fix $i \in [d]$. For each $w \in[d]^*$ with $|w|=m$, we write $r_{iw}^* e_w =\sum_{|v|= 2m+1}\alpha_v e_v$ (as before we suppress in the notation for $\alpha_v$ the dependency on $w$). Recall that we have $|\alpha_v| \le \sqrt{w(q)}^{-3m-2}\sqrt{[m]_{q}!}$ for any $|v|=2m+1$ (see the proof of Corollary \ref{Lipschitz_for_q_Gaussian}).
 Moreover, by Proposition \ref{Effros-Popa}, we have
 $$e_v = \sum_{\pi \in D(2m+1)}(-1)^{|p(\pi)|} q^{\mathrm{cross}(\pi)} \delta_{p(\pi)}  A^{s(\pi)}e_0.$$
 
 Then we have
 \begin{eqnarray*}
 \xi_i&=&\sum_{m=0}^{\infty} (-1)^{m}q^{\frac{m(m+1)}{2}} \sum_{|w|=m}r_{iw}^* e_w\\
&=&\sum_{m=0}^{\infty} (-1)^{m}q^{\frac{m(m+1)}{2}}\sum_{|w|=m}\sum_{|v|=2m+1} \alpha_v  e_v\\
&=& \sum_{m=0}^{\infty} (-1)^{m}q^{\frac{m(m+1)}{2}}\sum_{|w|=m}\sum_{|v|=2m+1}  \alpha_v \sum_{\pi \in D(2m+1)} (-1)^{|p(\pi)|} q^{\mathrm{cross}(\pi)}\delta_{p(\pi)}A^{s(\pi)}e_0. 
\end{eqnarray*}
This is our ''concrete'' realization for the $\xi_i$ as non-commutative power series in $A_1,\dots,A_d$. We claim that these power series have infinite radius of convergence. 
We set $A=\max_{i\in[d]} \|A_i\|> 1$.  Then we can estimate the operator norm as follows:
\begin{eqnarray*}\Vert \xi_i\Vert
&\leq &\sum_{m=0}^{\infty} |q|^{\frac{m(m+1)}{2}}\sum_{|w|=m}\sum_{|v|=2m+1}  |\alpha_v| \sum_{\pi \in D(2m+1)} |q^{\mathrm{cross}(\pi)}\delta_{p(\pi)}| \|A^{s(\pi)}\| \\
&\le& \sum_{m=0}^{\infty} |q|^{\frac{m(m+1)}{2}}\sum_{|w|=m}\sum_{|v|=2m+1} |\alpha_v| (2m+1)! A^{2m+1} \\
&\le& \sum_{m=0}^{\infty} |q|^{\frac{m(m+1)}{2}}\sum_{|w|=m}\sum_{|v|=2m+1}  
\sqrt{w(q)}^{-3m-2} \sqrt{[m]_q!} (2m+1)! A^{2m+1} \\
&=& \sum_{m=0}^{\infty} |q|^{\frac{m(m+1)}{2}} \left(\frac{d}{\sqrt{w(q)}}\right)^{3m+2} \sqrt{[m]_q!} (2m+1)! A^{2m+1}
\end{eqnarray*}
where we use $|D(2m+1)| \le (2m+1)!$, since all partitions in $D(2m+1)$ have blocks of size either 1 or 2 and can thus be identified with permutations in the symmetric group of degree $2m+1$. 

By the ratio test, this sum converges for any $A$ and thus this implies that the conjugate system is a $d$-tuple of noncommutative power series which are uniformly convergent with radius of convergence equal to $\infty$. 

A free Gibbs potential (see Section 1.2 in \cite{GS14}) for $q$-Gaussians is an operator $V \in W^*(A_1,\ldots,A_d)$ which satisfies $\mathcal{D}_i V = \xi_i$ for all $i \in [d]$ where 
$\mathcal{D}_i$'s are the cyclic derivatives defined by $\mathcal{D}_i = m_{\mathrm{flip}} \circ \partial_i$ ($m_{\mathrm{flip}}$ is defined by $ m_{\mathrm{flip}}(a \otimes b)=b a$).
When we write the conjugate system as noncommutative power series $\xi_i = \sum_{w \in [d]^*} \alpha(w,i) A^w$, this potential $V$ is formally given by (see the proof of Corollary 4.3 in \cite{GS14})
$$ V= \frac{1}{2} N^{-1}\left(\sum_{i=1}^d A_i \xi_i+\xi_i A_i\right) = \sum_{i=1}^d \sum_{w \in [d]^*} \frac{\alpha(w,i)}{2(1+|w|)} (A^{iw}+A^{wi})$$
where $N$ is the number operator which maps $A^w$ to $|w|A^w$. Estimates as above tell us the uniform convergence of the noncommutative power series on the right hand side, yielding the existence of a free Gibbs potential.
 \begin{prop}
 A free Gibbs potential exists for the  $q$-Gaussians, for any $-1<q<1$.
 \end{prop}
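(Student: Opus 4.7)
The plan is to take the explicit noncommutative power series
\[
V = \sum_{i=1}^d \sum_{w \in [d]^*} \frac{\alpha(w,i)}{2(1+|w|)}\bigl(A^{iw} + A^{wi}\bigr)
\]
derived just before the proposition as the candidate potential. Two things must be checked: first, that this series converges in the operator norm, so that $V$ defines an element of $W^*(A_1,\dots,A_d)$; and second, that the resulting $V$ satisfies $\mathcal{D}_i V = \xi_i$ for each $i\in[d]$.

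For the convergence, I would repeat essentially verbatim the estimate already carried out for $\xi_i$ itself. Using Proposition \ref{Effros-Popa} together with the bound $|\alpha_v| \leq \sqrt{w(q)}^{-3m-2}\sqrt{[m]_{|q|}!}$ on the coefficients appearing in $r_{iw}^* e_w$ (for $|w|=m$) and $|D(2m+1)| \leq (2m+1)!$, one obtains a bound
\[
\|V\| \leq \|A\|\sum_{m=0}^\infty \frac{|q|^{\frac{m(m+1)}{2}}}{1+m} \left(\frac{d}{\sqrt{w(q)}}\right)^{3m+2}\sqrt{[m]_{|q|}!}\,(2m+1)!\,\|A\|^{2m+1},
\]
where $\|A\|=\max_j\|A_j\|$. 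Compared with the estimate for $\|\xi_i\|$ in Corollary \ref{ConcreteFormula}, the only differences are an extra factor $\|A\|$ coming from the single additional occurrence of an $A_j$ in $A^{iw}$ or $A^{wi}$, and a factor $1/(2(1+|w|))\le 1/2$ which only helps. Hence the same ratio-test argument applies verbatim, the series converges in operator norm, and $V$ lies in the norm closure of $\mathbb{C}\langle A_1,\dots,A_d\rangle\subset W^*(A_1,\dots,A_d)$.

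For the identity $\mathcal{D}_i V = \xi_i$, I would invoke the formal computation from the proof of Corollary 4.3 in \cite{GS14}, which shows that $V=\tfrac{1}{2}N^{-1}(\sum_j A_j\xi_j+\xi_j A_j)$ is a free Gibbs potential for any conjugate system $(\xi_1,\dots,\xi_d)$ provided the relevant noncommutative power series converge. Explicitly, a direct check on monomials gives
\[
\mathcal{D}_j(A^{iw}+A^{wi}) = 2\delta_{ij}A^w + 2\sum_{k:\,w_k=j}A^{w_{k+1}\cdots w_n i w_1\cdots w_{k-1}},
\]
and weighting by $\frac{\alpha(w,i)}{2(1+|w|)}$ and summing over $(i,w)$ reproduces $\xi_j$ after a cyclic reindexing that is a consequence of $\xi_j=\partial_j^*(1\otimes 1)$. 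The operator-norm convergence established in the first step then transfers this polynomial-level identity to the norm limit, yielding $\mathcal{D}_i V=\xi_i$.

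The main obstacle is really only the norm convergence of the series defining $V$; once the estimates of the previous sections are reused, this is immediate. The identity $\mathcal{D}_i V=\xi_i$ is then a standard formal consequence of the Guionnet–Shlyakhtenko potential formula, made rigorous by this convergence.
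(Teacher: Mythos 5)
Your proposal is correct and follows essentially the same route as the paper: take $V=\frac12 N^{-1}(\sum_i A_i\xi_i+\xi_iA_i)$ expanded as a noncommutative power series via Proposition \ref{Effros-Popa}, reuse the operator-norm estimates already established for $\xi_i$ (the extra factor $\|A\|$ and the harmless $1/(2(1+|w|))$ do not affect the ratio test), and appeal to the formal computation of Corollary 4.3 in \cite{GS14} for the identity $\mathcal{D}_iV=\xi_i$. The paper is in fact terser than you are — it simply states that ``estimates as above'' give uniform convergence — so your write-up adds nothing contradictory and slightly more detail.
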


\end{document}